\documentclass[11pt,a4paper]{article}

\usepackage[T1]{fontenc}
\usepackage[utf8]{inputenc}
\usepackage{amsmath,amsthm,amssymb,amsfonts}
\usepackage{mathtools}
\usepackage{geometry}
\usepackage{hyperref}
\usepackage{enumitem}
\usepackage{microtype}
\usepackage{booktabs}

\hypersetup{hidelinks}
\setlist{
  itemsep=2pt,
  topsep=4pt,
  parsep=0pt,
  partopsep=0pt
}

\makeatletter
\renewenvironment{abstract}{%
  \par\vspace{0.75em}%
  \begingroup
  \small
  \noindent\textbf{\abstractname.}\quad\ignorespaces
}{%
  \par\endgroup\vspace{1.1em}%
}
\makeatother

\newcommand{\Pn}{\mathcal{P}_n}

\newcommand{\Cay}{\operatorname{Cay}}
\newcommand{\Aut}{\operatorname{Aut}}
\newcommand{\Stab}{\operatorname{Stab}}
\newcommand{\eval}{\operatorname{eval}}
\newcommand{\id}{\operatorname{id}}
\newcommand{\ord}{\operatorname{ord}}
\newcommand{\Comp}{\kappa}

\newcommand{\calT}{\mathcal{T}}

\newtheorem{theorem}{Theorem}[section]
\newtheorem*{theorem*}{Theorem}
\newtheorem{lemma}[theorem]{Lemma}
\newtheorem{proposition}[theorem]{Proposition}
\newtheorem{corollary}[theorem]{Corollary}

\theoremstyle{definition}
\newtheorem{definition}[theorem]{Definition}
\newtheorem{example}[theorem]{Example}
\newtheorem{problem}[theorem]{Problem}

\theoremstyle{remark}
\newtheorem{remark}[theorem]{Remark}

\title{\textbf{The Left-Regular Stabilizer of Zaks' Hamiltonian Cycle in the Pancake Graph}}

\author{
Or-Hai Benjo\thanks{\href{mailto:ksamim12321@gmail.com}{\texttt{ksamim12321@gmail.com}}}
\and
Yehonathan Sharvit\thanks{\href{mailto:viebel@gmail.com}{\texttt{viebel@gmail.com}}}
}

\date{}

\begin{document}

\maketitle

% ==================================================
\begin{abstract}
Let $\Pn=\Cay(S_n,\{r_2,\ldots,r_n\})$ be the pancake graph, with prefix reversals acting on the right. Conjugating Zaks' suffix-reversal permutation Gray code by the full reversal
gives a distinguished Hamiltonian cycle \(Z_n\) in \(\Pn\). We determine the
stabilizer of this particular cycle under the left regular action of \(S_n\). If
\[
\rho=r_{n-1}r_n=[n,1,2,\ldots,n-1],
\]
then, for every \(n\ge3\),
\[
\Stab_{L(S_n)}(Z_n)=\langle L_\rho,L_{r_n}\rangle\cong D_n,
\]
where \(D_n\) denotes the dihedral group of order \(2n\). The inclusion
\(\supseteq\) follows from the recursive block decomposition $W_n=(W_{n-1}r_n)^{n-1}W_{n-1}$ and from the palindromy \(W_n^R=W_n\). The reverse inclusion follows from a
general cyclic-order rigidity lemma: if a Hamiltonian cycle on a finite group
is invariant under \(L_a\), with \(\ord(a)\ge3\), then every left translation
preserving the same cycle conjugates \(a\) to \(a\) or \(a^{-1}\). For
\(n\ge5\), Deng--Zhang's automorphism theorem gives the same stabilizer inside
\(\Aut(\Pn)\); the exceptional ranks are handled separately. We also compute
the compression factor of \(Z_n\): it is \(n\) for \(n\ge4\) and \(6\) for
\(n=3\).
\end{abstract}

% ==================================================
\section{Introduction}

The pancake graph is the Cayley graph of the symmetric group generated by
prefix reversals. Its vertices are the permutations of
\([n]=\{1,\ldots,n\}\), and two vertices are adjacent precisely when one is
obtained from the other by reversing an initial segment. Hamiltonian cycles in
this graph are cyclic permutation Gray codes whose transition set is restricted
to prefix reversals. The graph appears naturally both in pancake sorting and in
the theory of symmetric interconnection networks; see, for example,
\cite{akers-krishnamurthy,gates-papadimitriou,lakshmivarahan-jwo-dhall,
kanevsky-feng,deng-zhang}.

The present paper is not concerned with Hamiltonicity itself. Explicit
permutation-generation algorithms already supply Hamiltonian cycles in
\(\Pn\). Thus the contribution is not a new Hamiltonicity theorem for pancake
graphs, but an exact stabilizer computation for a canonical Hamiltonian cycle.
Our object is the stabilizer of one distinguished cycle: the cycle obtained
from Zaks' recursive suffix-reversal Gray code \cite{zaks} after conjugating
suffix reversals into prefix reversals. We prove that the evident dihedral
symmetries forced by the recursion are the only left-regular symmetries of
this cycle. Our object is the stabilizer of one distinguished cycle: the cycle
obtained from Zaks' recursive suffix-reversal Gray code \cite{zaks} after
conjugating suffix reversals into prefix reversals. We prove that the evident
dihedral symmetries forced by the recursion are the only left-regular
symmetries of this cycle.

Zaks constructed an ordering of the $n!$ permutations in which consecutive permutations differ by reversing a suffix. Conjugation by the full reversal transforms suffix reversals into prefix reversals. With the right-Cayley convention fixed below, this gives a Hamiltonian path in $\Pn$, from $\id$ to $r_n$; adjoining the edge $\{r_n,\id\}$ gives a Hamiltonian cycle, denoted $Z_n$.

For $g\in S_n$, let $L_g:S_n\to S_n$ denote left multiplication, $L_g(\pi)=g\pi$, and put $\rho=r_{n-1}r_n=[n,1,2,\ldots,n-1]$.

\begin{theorem*}[Main theorem]
For every $n\ge3$,
\[
\Stab_{L(S_n)}(Z_n)=\langle L_\rho,L_{r_n}\rangle\cong D_n,
\]
where $D_n$ is the dihedral group of order $2n$.
\end{theorem*}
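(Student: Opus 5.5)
The plan has two halves, following the outline in the abstract: show that $L_\rho$ and $L_{r_n}$ preserve $Z_n$, and then show that no other left translation does.

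\textbf{Setup.} Write $W_n$ for the word of prefix-reversal indices labelling the edges of $Z_n$. Traversing $Z_n$ from $\id$, the $k$-th vertex is $v_k=r_{w_1}\cdots r_{w_k}$. Since left multiplication commutes with right multiplication, $L_g$ sends the cycle to the cycle through $g, gv_1, gv_2,\ldots$, which carries the same cyclic edge word. Hence $L_g$ preserves $Z_n$ as an undirected cycle if and only if $g$ is a vertex $v_m$ of $Z_n$ at which the cyclic word $W_n r_n$ (closing edge included), read forward or backward from position $m$, agrees with itself.

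\textbf{The inclusion $\supseteq$.} I would prove the recursion $W_n=(W_{n-1}r_n)^{n-1}W_{n-1}$ and the palindromy $W_n^R=W_n$ by induction from Zaks' construction.

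\emph{Rotation.} The recursion gives the cyclic word $(W_{n-1}r_n)^n$, which is invariant under a cyclic shift by one block. The vertex reached after one block is $v=(\text{product of }W_{n-1})\cdot r_n$. By induction, the product of $W_{n-1}$ is the endpoint $r_{n-1}$ of the $(n-1)$-path, so $v=r_{n-1}r_n=\rho$. Therefore $L_\rho$ stabilizes $Z_n$, and $\rho$ has order $n$.

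\emph{Reflection.} Palindromy of $W_n$, together with the closing letter $r_n$, makes the cyclic word read backward from the vertex $r_n$ (the path's endpoint) coincide with the word read forward from $\id$. Since $L_{r_n}$ maps $\id$ to $r_n$, it reverses the orientation of the cycle while preserving it.

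\emph{Dihedral relation.} I would then check $r_n\rho r_n=\rho^{-1}$ directly: $\rho$ is a cyclic shift and conjugating by the full reversal inverts it. This gives $\langle L_\rho,L_{r_n}\rangle\cong D_n$.

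\textbf{The inclusion $\subseteq$.} This uses the rigidity lemma.

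\emph{Lemma.} Let $C$ be a Hamiltonian cycle on a finite group $G$, invariant under $L_a$ with $\ord(a)\ge3$. Then every left translation $L_g$ preserving $C$ satisfies $gag^{-1}=a^{\pm1}$.

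\emph{Proof sketch.} $L_a$ acts on the $|G|$-cycle $C$ as a dihedral symmetry with no fixed vertices, and it has order at least $3$. So it is a rotation by $|G|/\ord(a)\cdot j$ steps for some $j$ coprime to $\ord(a)$; it cannot be a reflection, since a reflection has order $2$. The stabilizer $\Stab(C)$ embeds into the dihedral group $\Aut(C)$. Conjugating a rotation by any element of $\Aut(C)$ gives that rotation or its inverse, and $L_gL_aL_g^{-1}=L_{gag^{-1}}$. Faithfulness of $g\mapsto L_g$ then gives $gag^{-1}\in\{a,a^{-1}\}$.

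\emph{Applying it.} Take $a=\rho$ with $n\ge3$. Any $g$ in the stabilizer lies in $N_{S_n}(\langle\rho\rangle)\cap\{g: g\rho g^{-1}=\rho^{\pm1}\}$. This set is the dihedral group generated by $\langle\rho\rangle$ (its centralizer, since $\rho$ is an $n$-cycle) and one inverting element such as $r_n$, so it has order $2n$. Hence $\Stab\subseteq\langle L_\rho,L_{r_n}\rangle$.

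\textbf{Main obstacle.} I expect the hardest part to be the careful inductive verification that the endpoint of the path is $r_n$ and that $W_n$ is a palindrome, with the conventions (right action, conjugation by the full reversal) handled consistently. The upper bound, by contrast, is essentially formal once the lemma is in place.
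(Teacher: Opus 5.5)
Your proposal is correct and follows essentially the same route as the paper. The rotation comes from $\calT_n=(W_{n-1}r_n)^n$ with $\eval(W_{n-1}r_n)=\rho$, the reflection from palindromy together with $\eval(W_n)=r_n$, and the upper bound from the cyclic-order rigidity lemma applied to $\rho$ with $C_{S_n}(\rho)=\langle\rho\rangle$.

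The one step you leave implicit is $r_n\notin\langle\rho\rangle$. It is needed in two places: so that $\langle L_\rho,L_{r_n}\rangle$ has order $2n$ rather than being a proper quotient of $D_n$, and so that the two cosets making up $N^{\pm}_{S_n}(\rho)$ are disjoint. It follows at once from $r_n\rho r_n=\rho^{-1}\neq\rho$ for $n\ge3$, and the paper checks it explicitly.
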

The proof separates the construction-specific symmetries from a general
group-theoretic obstruction. The recursion $W_n=(W_{n-1}r_n)^{n-1}W_{n-1}$ decomposes \(Z_n\) into \(n\) consecutive blocks of length \((n-1)!\), whose
initial vertices are
\[
\id,\rho,\rho^2,\ldots,\rho^{n-1}.
\]
Thus \(L_\rho\) rotates the block system. The palindromy \(W_n^R=W_n\),
together with \(\eval(W_n)=r_n\), gives a reflection \(L_{r_n}\). These two
visible symmetries generate a dihedral subgroup of order \(2n\).

The upper bound is independent of the pancake generating set. If a Hamiltonian
cycle on a finite group \(G\) is invariant under \(L_a\), with
\(\ord(a)\ge3\), then every left translation preserving the same cyclic order
conjugates \(a\) to \(a\) or \(a^{-1}\). Applied to the \(n\)-cycle
\(\rho\in S_n\), this confines the left-regular stabilizer to the dihedral
normalizer of \(\rho\), which has order \(2n\).

For \(n\ge5\), Deng--Zhang's automorphism theorem gives \(\Aut(\Pn)=L(S_n)\), so the same dihedral group is the full graph-automorphism stabilizer of \(Z_n\). We also compute the compression factor of \(Z_n\) in the sense of Gregor, Merino, and M\"utze \cite{gregor-merino-mutze}: it is \(n\) for \(n\ge4\), while the exceptional graph \(\mathcal P_3\) gives compression factor \(6\). This statement concerns the particular cycle \(Z_n\), not the graph invariant \(\Comp(\Pn)\).

The paper first fixes the prefix-reversal form of Zaks' construction, then proves
the visible dihedral symmetries, the cyclic-order rigidity bound, the full
automorphism stabilizer, and finally the compression and quotient consequences.

% ==================================================
\section{Conventions and Zaks' Cycle}
\label{sec:conventions}

We use one-line notation \(\pi=[\pi(1),\ldots,\pi(n)]\) for permutations in \(S_n\). Composition is from right to left: \((\pi\sigma)(i)=\pi(\sigma(i))\). Thus right multiplication acts on positions. In particular, if a permutation \(\sigma\) reverses the first \(k\) positions, then the one-line notation of \(\pi\sigma\) is obtained from that of \(\pi\) by reversing the first \(k\) entries.

\begin{definition}[Prefix reversals]
For \(2\le k\le n\), define \(r_k=[k,k-1,\ldots,2,1,k+1,\ldots,n]\in S_n\). Thus \(r_k\) reverses the first \(k\) positions and fixes all positions greater than \(k\). Each \(r_k\) is an involution.
\end{definition}

\begin{definition}[The pancake graph]
Let \(R_n=\{r_2,\ldots,r_n\}\). The pancake graph is the undirected right Cayley graph \(\Pn=\Cay(S_n,R_n)\), with vertex set \(S_n\) and edge set \(E(\Pn)= \bigl\{\{\pi,\pi r_k\}:\pi\in S_n,\ 2\le k\le n\bigr\}\).
\end{definition}

Since the generators $r_k$ are involutions and are distinct, $\Pn$ is a simple undirected graph. For every $g\in S_n$, left multiplication $L_g(\pi)=g\pi$ is an automorphism of $\Pn$, because
\[
L_g(\pi r_k)=g(\pi r_k)=(g\pi)r_k=L_g(\pi)r_k.
\]
We write $L(S_n)=\{L_g:g\in S_n\}$.

We shall use the canonical edge label \(\lambda(\{\pi,\pi r_k\})=r_k\).
This label is well-defined. Indeed, if $\{\pi,\pi r_k\}=\{\sigma,\sigma r_\ell\},$
then the quotient of the two endpoints is \(r_k\) in one orientation and
\(r_\ell\) in the other. Since \(r_k^{-1}=r_k\) and \(r_\ell^{-1}=r_\ell\),
it follows that \(r_k=r_\ell\), and hence \(k=\ell\). In particular, every
left translation preserves \(\lambda\).

\begin{definition}[Suffix reversals]
Let \(J_n=r_n=[n,n-1,\ldots,1]\). For \(2\le k\le n\), define
\[
t_k=[1,2,\ldots,n-k,n,n-1,\ldots,n-k+1].
\]
Thus right multiplication by \(t_k\) reverses the last \(k\) entries of a
one-line permutation.
\end{definition}

Under the usual identification of a word \(p_1p_2\cdots p_n\) with the
one-line permutation \([p_1,\ldots,p_n]\), Zaks' operation of reversing the
last \(k\) entries is exactly right multiplication by \(t_k\) under the
composition convention fixed above.

\begin{lemma}[Prefix--suffix conjugacy]
\label{lem:prefix-suffix-conjugacy}
For every \(2\le k\le n\), \(J_nt_kJ_n=r_k\). Consequently, conjugation by \(J_n\) transforms a suffix-reversal walk with transition labels \(t_{i_1},\ldots,t_{i_m}\) into a prefix-reversal walk with transition labels \(r_{i_1},\ldots,r_{i_m}\).
\end{lemma}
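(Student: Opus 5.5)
The plan is to verify the identity $J_nt_kJ_n=r_k$ pointwise, using the composition convention $(\pi\sigma)(i)=\pi(\sigma(i))$, and then derive the walk statement by pure algebra.

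\textbf{The identity.} First we describe $J_n$ and $t_k$ as functions on positions:
\[
J_n(i)=n+1-i \quad\text{for all } i,
\]
and
\[
t_k(i)=i \ \text{ for } i\le n-k, \qquad t_k(i)=(2n-k+1)-i \ \text{ for } i>n-k .
\]
The second formula follows by reading off the one-line notation, since position $n-k+j$ carries $n+1-j$. We then compute $J_nt_kJ_n(i)$ in two cases, according to whether $J_n(i)=n+1-i$ lies in the fixed range or in the reversed suffix.

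If $i\ge k+1$, then $n+1-i\le n-k$, so $t_k$ fixes it. Applying $J_n$ again returns $i$, which matches $r_k(i)=i$.

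If $i\le k$, then $n+1-i>n-k$, and
\[
t_k(n+1-i)=(2n-k+1)-(n+1-i)=n-k+i .
\]
Applying $J_n$ gives $n+1-(n-k+i)=k+1-i$, which is exactly $r_k(i)$.

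Hence $J_nt_kJ_n=r_k$. The only place an error could creep in is the index bookkeeping in these two cases. I will double-check it on the one-line description: conjugating by the full reversal turns "reverse the last $k$" into "reverse the first $k$."

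\textbf{The walk statement.} Let $\pi_0,\pi_1,\ldots,\pi_m$ be a suffix-reversal walk with $\pi_j=\pi_{j-1}t_{i_j}$. Set $\sigma_j=J_n\pi_jJ_n$. Since $J_n^2=\id$, we can insert $J_nJ_n$ and obtain
\[
\sigma_j=J_n\pi_{j-1}J_n\,J_nt_{i_j}J_n=\sigma_{j-1}r_{i_j}.
\]
So $(\sigma_j)$ is a walk in $\Pn$ with transition labels $r_{i_1},\ldots,r_{i_m}$.

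Conjugation by $J_n$ is a bijection of $S_n$. It therefore preserves distinctness of vertices, so Hamiltonian paths and cycles are carried to Hamiltonian paths and cycles. It also fixes $\id$, which is the fact needed later when the converted Zaks path is anchored at $\id$.
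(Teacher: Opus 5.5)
Your proposal is correct and follows essentially the same route as the paper: a pointwise case check that $(J_nt_kJ_n)(i)$ equals $k+1-i$ for $i\le k$ and $i$ for $i>k$, followed by inserting $J_nJ_n=\id$ to turn $\pi_j=\pi_{j-1}t_{i_j}$ into $\sigma_j=\sigma_{j-1}r_{i_j}$. Your explicit formula $t_k(i)=2n-k+1-i$ on the reversed suffix just supplies the index bookkeeping that the paper states without derivation.
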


\begin{proof}
The permutation \(J_n\) reverses the order of positions. Hence conjugation by \(J_n\) carries the set of last \(k\) positions onto the set of first \(k\) positions and transports the reversal of that suffix to the reversal of the corresponding prefix. Equivalently, for \(1\le i\le n\),
\[
(J_nt_kJ_n)(i)
=
\begin{cases}
k+1-i, & 1\le i\le k,\\
i, & k<i\le n,
\end{cases}
\]
which is exactly \(r_k\). If \(\pi_{j+1}=\pi_jt_k\), then
\[
J_n\pi_{j+1}J_n
=
J_n(\pi_jt_k)J_n
=
(J_n\pi_jJ_n)(J_nt_kJ_n)
=
(J_n\pi_jJ_n)r_k.
\]
Thus the conjugated walk has the asserted prefix-reversal transition labels.
\end{proof}

For a word \(W=s_1s_2\cdots s_m\) in the alphabet \(R_n\), define \(\eval(W)=s_1s_2\cdots s_m\in S_n\). The word \(W\), read from the identity, generates the vertex sequence \(u_0=\id,\ u_i=s_1s_2\cdots s_i\quad(1\le i\le m)\). We also write \(W^R=s_m\cdots s_2s_1\) for the reversed word.

Zaks' suffix-reversal sequence is recursively defined by \(s_2=2,\ s_n=(s_{n-1}n)^{n-1}s_{n-1} \quad(n\ge3)\). Here \(s_n\) is a sequence of suffix lengths. Replacing each integer \(k\) by the suffix reversal \(t_k\) gives a word in the generators \(t_2,\ldots,t_n\). We use the following theorem of Zaks in precisely this form.

\begin{theorem}[Zaks]
\label{thm:zaks-original}
For every \(n\ge2\), the suffix-reversal word determined by \(s_n\) has length \(n!-1\), and successive right multiplication by the corresponding suffix reversals, starting at \(\id\), visits every element of \(S_n\) exactly once.
\end{theorem}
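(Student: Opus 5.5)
We prove the statement by induction on \(n\), working with the suffix-reversal word \(T_n\) obtained from \(s_n\) by replacing each integer \(k\) by \(t_k\). The length claim is immediate. Indeed, \(|s_2|=1=2!-1\), and if \(|s_{n-1}|=(n-1)!-1\), then
\[
|s_n|=(n-1)\bigl(|s_{n-1}|+1\bigr)+|s_{n-1}|=n\bigl((n-1)!-1\bigr)+(n-1)=n!-1.
\]
For the visiting claim, note that the letters of \(s_{n-1}\) are at most \(n-1\). Hence \(T_{n-1}\) never touches the first entry of the one-line word. Moreover, by the inductive hypothesis applied to the last \(n-1\) positions, starting from any \(\pi\), the walk \(T_{n-1}\) visits exactly the \((n-1)!\) permutations with first entry \(\pi(1)\), each once. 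So the \(n\) blocks of \(T_n\) (the copies of \(T_{n-1}\) separated by the letters \(t_n\)) each sweep out one full ``first-entry class''. The statement therefore reduces to showing that the \(n\) blocks start at permutations with pairwise distinct first entries.

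To control the block starting points, I will strengthen the induction hypothesis so that it also records \(\eval(T_{n})\). The claim to carry along is that \(\eval(T_n)=J_n\) for \(n\ge2\), where \(J_n=t_n\) is the full reversal. The base case \(n=2\) holds since \(t_2=J_2\). For the inductive step, \(\eval(T_{n-1})\) as an element of \(S_n\) is the reversal of the last \(n-1\) positions, which is \(t_{n-1}\). The key computation is then the composite \(c=t_{n-1}t_n\). Right multiplication by \(t_{n-1}\) followed by \(t_n\) sends \([p_1,\ldots,p_n]\) first to \([p_1,p_n,\ldots,p_2]\) and then to \([p_2,\ldots,p_n,p_1]\). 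This is a cyclic shift of positions, so \(c\) is an \(n\)-cycle on positions. The block starts are therefore \(\id,c,c^2,\ldots,c^{n-1}\), whose first entries are \(1,2,\ldots,n\), all distinct. This settles the visiting claim.

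It remains to close the strengthened hypothesis, that is, to show
\[
\eval(T_n)=c^{\,n-1}t_{n-1}=J_n .
\]
Since \(c^n=\id\), we have \(c^{n-1}=c^{-1}=t_nt_{n-1}\). Hence
\[
c^{n-1}t_{n-1}=t_nt_{n-1}t_{n-1}=t_n=J_n,
\]
as required.

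The main obstacle is bookkeeping: keeping straight that right multiplication acts on positions, and checking that the inner recursion really acts on the last \(n-1\) positions in the embedded copy of \(S_{n-1}\). That embedding is what justifies using the inductive hypothesis for each block starting at an arbitrary \(\pi\); the argument uses the fact that right translation by \(\pi\) transports walks from \(\id\) to walks from \(\pi\).
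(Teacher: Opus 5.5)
Your proof is correct. It differs from the paper in that the paper proves nothing here: it quotes the statement from Zaks and uses it as a black box in Theorem~\ref{thm:zaks-path-prefix}.

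Your induction is essentially the suffix-side twin of the paper's own Lemmas~\ref{lem:endpoint} and~\ref{lem:block-decomposition}. Your strengthened hypothesis $\eval(T_n)=J_n$ conjugates to $\eval(W_n)=r_n$. By Lemma~\ref{lem:prefix-suffix-conjugacy}, your shift $c=t_{n-1}t_n=[2,3,\ldots,n,1]$ satisfies $J_ncJ_n=r_{n-1}r_n=\rho$, so your block starts $c^k$ are the conjugates of the paper's $\rho^k$.

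The one ingredient the paper never needs, because it imports Hamiltonicity, is your coset step. The block starting at $c^k$ sweeps $c^k\Stab_{S_n}(1)$, the permutations with first entry $k+1$. Under $\pi\mapsto J_n\pi J_n$ this set becomes $\rho^kH_n=\{\pi:\pi(n)=n-k\}$. The disjointness of these $n$ cosets is exactly the transversal fact for $H_n$ proved in the quotient subsection of Section~\ref{sec:further}. So the paper's prefix-form lemmas plus that observation would prove Theorem~\ref{thm:zaks-path-prefix} outright. In prefix form the inner recursion sits in the standard copy of $S_{n-1}$ fixing $n$, which spares you the index-shifted embedding of $S_{n-1}$ as the stabilizer of position $1$. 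The citation buys the paper brevity; your route buys a self-contained argument.

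Two small points:
\begin{enumerate}
\item It is left translation $L_\pi$, not right translation, that carries the walk from $\id$ to the walk from $\pi$. The walk $\id,x_1,x_1x_2,\ldots$ becomes $\pi,\pi x_1,\pi x_1x_2,\ldots$, whereas $u\mapsto u\pi$ does not commute with right multiplication by the letters. What you actually use, namely that the block from $\pi$ is the coset $\pi\Stab_{S_n}(1)$, is the left-translation statement, so nothing breaks.
\item You leave the $n=2$ case of the visiting claim implicit, though it is just the walk $[1,2],[2,1]$.
\end{enumerate}
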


\begin{definition}[The prefix Zaks word]
Define \(W_2=r_2\). For \(n\ge3\), define recursively \(W_n=(W_{n-1}r_n)^{n-1}W_{n-1}\). In this formula \(W_{n-1}\) is viewed as a word in the embedded generators \(r_2,\ldots,r_{n-1}\in S_n\), each of which fixes the \(n\)-th position.
\end{definition}
\begin{lemma}
\label{lem:rho-cycle-early}
Let $\rho_n=r_{n-1}r_n\in S_n$. Then $\rho_n=[n,1,2,\ldots,n-1]$, and $\rho_n$ has order $n$.
\end{lemma}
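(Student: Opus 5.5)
We compute \(\rho_n\) directly from the composition convention \((\pi\sigma)(i)=\pi(\sigma(i))\), and then read off its cycle structure.

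\textbf{Step 1 (one-line form).} For \(1\le i\le n\) we have \(\rho_n(i)=r_{n-1}(r_n(i))\), and \(r_n(i)=n+1-i\).
\begin{itemize}
\item If \(i=1\), then \(r_n(1)=n\), which \(r_{n-1}\) fixes. So \(\rho_n(1)=n\).
\item If \(2\le i\le n\), then \(r_n(i)=n+1-i\) lies in \(\{1,\ldots,n-1\}\). Applying \(r_{n-1}\), which sends \(j\) to \(n-j\) there, gives \(\rho_n(i)=n-(n+1-i)=i-1\).
\end{itemize}
Hence \(\rho_n=[n,1,2,\ldots,n-1]\). Equivalently, right multiplication by \(\rho_n\) first reverses all \(n\) entries and then reverses the first \(n-1\) of them. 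Its net effect on a one-line word is a cyclic shift that moves the last entry to the front, which is a useful sanity check.

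\textbf{Step 2 (order).} From Step 1, \(\rho_n\) sends \(i\mapsto i-1\) for \(i\ge2\) and sends \(1\mapsto n\). So \(\rho_n\) is the single \(n\)-cycle \((1\;n\;n-1\;\cdots\;2)\). An \(n\)-cycle has order exactly \(n\). Concretely, \(\rho_n^k(i)\equiv i-k \pmod n\), with representatives taken in \([n]\). This equals \(i\) for all \(i\) exactly when \(n\mid k\).

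\textbf{Main obstacle.} There is no real difficulty here. The only point requiring care is applying \(r_n\) first, as the right-to-left convention dictates. Reversing the order would give \(r_nr_{n-1}=[2,3,\ldots,n,1]=\rho_n^{-1}\), which is the wrong one-line form although it has the same order. I will also check the small case \(n=2\) separately if the statement is meant to include it. There \(r_1\) is the identity by convention, so \(\rho_2=r_2=[2,1]\), which has order \(2\), as claimed.
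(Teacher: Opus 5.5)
Your computation is correct and is essentially the paper's proof: you evaluate $r_{n-1}(r_n(i))$ separately for $i=1$ and for $2\le i\le n$, and then read off that $\rho_n$ is an $n$-cycle. One small slip in your sanity-check remark: since $\pi\rho_n=(\pi r_{n-1})r_n$, right multiplication by $\rho_n$ first reverses the first $n-1$ entries and then all $n$ entries (the order you wrote would move the first entry to the end instead), though this does not affect the proof.
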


\begin{proof}
Rightmost factors act first. For $i=1$, $r_n(1)=n$, and $r_{n-1}$ fixes $n$, so $\rho_n(1)=n$. For $2\le i\le n$, one has $r_n(i)=n+1-i\le n-1$, and hence
\[
\rho_n(i)=r_{n-1}(n+1-i)=i-1.
\]
Thus $\rho_n=[n,1,2,\ldots,n-1]$, an $n$-cycle.
\end{proof}

\begin{lemma}[Endpoint]
\label{lem:endpoint}
For every $n\ge2$, $\eval(W_n)=r_n$.
\end{lemma}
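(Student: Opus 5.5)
We argue by induction on \(n\), using the recursion \(W_n=(W_{n-1}r_n)^{n-1}W_{n-1}\) together with the fact that \(\eval\) is multiplicative on concatenations of words.

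For the base case \(n=2\), we have \(W_2=r_2\), so \(\eval(W_2)=r_2\) by definition.

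For the inductive step, let \(n\ge3\) and assume \(\eval(W_{n-1})=r_{n-1}\) in \(S_{n-1}\). Under the embedding of \(S_{n-1}\) into \(S_n\) as the stabilizer of position \(n\), the same word in \(r_2,\ldots,r_{n-1}\) evaluates to the embedded \(r_{n-1}\in S_n\). This step needs a short justification: the embedding is a homomorphism that sends each generator \(r_k\) of \(S_{n-1}\) to the generator \(r_k\) of \(S_n\). Multiplicativity of \(\eval\) then gives
\[
\eval(W_n)=\bigl(\eval(W_{n-1})\,r_n\bigr)^{n-1}\eval(W_{n-1})=(r_{n-1}r_n)^{n-1}r_{n-1}=\rho_n^{\,n-1}r_{n-1},
\]
where \(\rho_n=r_{n-1}r_n\) as in Lemma~\ref{lem:rho-cycle-early}.

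To finish, we use that \(\rho_n\) has order \(n\) (Lemma~\ref{lem:rho-cycle-early}), so \(\rho_n^{\,n-1}=\rho_n^{-1}=(r_{n-1}r_n)^{-1}=r_nr_{n-1}\), since both reversals are involutions. Therefore
\[
\eval(W_n)=r_nr_{n-1}r_{n-1}=r_n,
\]
which completes the induction.

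There is no real obstacle here. The only point requiring care is the compatibility of the embedding \(S_{n-1}\hookrightarrow S_n\) with the generators and with \(\eval\), and that holds because \(r_k\) for \(k<n\) fixes position \(n\).
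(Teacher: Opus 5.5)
Your proof is correct and follows the paper's argument: induction via the recursion gives $\eval(W_n)=\rho_n^{\,n-1}r_{n-1}$, and then $\ord(\rho_n)=n$ finishes it. The paper writes $r_{n-1}=\rho_n r_n$ to get $\rho_n^{\,n}r_n=r_n$, while you write $\rho_n^{\,n-1}=\rho_n^{-1}=r_nr_{n-1}$; this is the same computation arranged differently.
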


\begin{proof}
The case \(n=2\) is immediate. Assume \(n\ge3\) and
\(\eval(W_{n-1})=r_{n-1}\), with \(S_{n-1}\) embedded in \(S_n\)
by fixing \(n\). Since \(\rho_n=r_{n-1}r_n\),
\[
\eval(W_n)
=
(r_{n-1}r_n)^{n-1}r_{n-1}
=
\rho_n^{n-1}r_{n-1}.
\]
Moreover, \(r_{n-1}=\rho_n r_n\). Therefore $\eval(W_n)=\rho_n^n r_n=r_n.$
\end{proof}

\begin{theorem}[Zaks path in prefix form]
\label{thm:zaks-path-prefix}
For every $n\ge2$, the word $W_n$ generates a Hamiltonian path in $\Pn$ from $\id$ to $r_n$. For $n\ge3$, adding the edge $\{r_n,\id\}$ gives a Hamiltonian cycle in $\Pn$.
\end{theorem}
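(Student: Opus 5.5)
The plan is to transport Zaks' theorem through the conjugation of Lemma~\ref{lem:prefix-suffix-conjugacy}, and then close the cycle using Lemma~\ref{lem:endpoint}.

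\textbf{Step 1: the word $W_n$ is the conjugate of Zaks' word.} By induction on $n$ we show that $W_n$ is obtained from Zaks' suffix-length sequence $s_n$ by replacing each integer $k$ by $r_k$. For $n=2$ this holds since $s_2=2$ and $W_2=r_2$. For $n\ge3$, the recursions $s_n=(s_{n-1}n)^{n-1}s_{n-1}$ and $W_n=(W_{n-1}r_n)^{n-1}W_{n-1}$ have the same shape. There is one subtlety. The symbols of $s_{n-1}$ stand for suffix lengths, and we read them as suffix reversals $t_k\in S_n$ of the last $k$ positions; by contrast, $W_{n-1}$ is embedded by fixing position $n$. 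This causes no conflict, because a length-$k$ reversal is determined by $k$ alone in both settings. In $S_n$ we have $t_k\leftrightarrow r_k$ via Lemma~\ref{lem:prefix-suffix-conjugacy}, and the embedded $r_k\in S_{n-1}\subset S_n$ is exactly the prefix reversal $r_k\in S_n$. So the label sequences match letter by letter, and the claim follows.

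\textbf{Step 2: the Hamiltonian path.} Let $\pi_0=\id,\pi_1,\ldots,\pi_{n!-1}$ be Zaks' vertex sequence. By Theorem~\ref{thm:zaks-original} it lists every element of $S_n$ exactly once, with $\pi_{j+1}=\pi_jt_{i_{j+1}}$. Set $\sigma_j=J_n\pi_jJ_n$. Conjugation by $J_n$ is a bijection of $S_n$, so the $\sigma_j$ are again all distinct and exhaust $S_n$. Also $\sigma_0=\id$. By Lemma~\ref{lem:prefix-suffix-conjugacy} we have $\sigma_{j+1}=\sigma_jr_{i_{j+1}}$, and by Step~1 these are precisely the prefix sums of $W_n$, that is, the vertex sequence $u_j$ generated by $W_n$ from $\id$. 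Each consecutive pair is an edge of $\Pn$. Hence $W_n$ generates a Hamiltonian path starting at $\id$, and by Lemma~\ref{lem:endpoint} its last vertex is $\eval(W_n)=r_n$.

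\textbf{Step 3: closing the cycle.} The pair $\{r_n,\id\}=\{\id,\id\,r_n\}$ is an edge of $\Pn$. For $n\ge3$ the path has $n!\ge6>2$ vertices, so $\id$ and $r_n$ are distinct and not consecutive in the path. Indeed, the first letter of $W_n$ is $r_2\ne r_n$ (by induction, $W_n$ begins with $W_2$), so $u_1=r_2\neq r_n$. Therefore adding this edge produces a genuine Hamiltonian cycle rather than a doubled edge.

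The main obstacle is the bookkeeping in Step~1: we must check that Zaks' integer labels, once read inside $S_n$, coincide with the embedded generators. Everything else is formal transport of structure.
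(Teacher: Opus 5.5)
Your proposal is correct and follows the paper's proof: you transport Zaks' path through $\Phi(\pi)=J_n\pi J_n$ using Lemma~\ref{lem:prefix-suffix-conjugacy}, identify the terminal vertex via Lemma~\ref{lem:endpoint}, and close the path with the edge $\{r_n,\id\}$. Your Step~1 induction, which matches $s_n$ to $W_n$ letter by letter, and your Step~3 check that the closing edge is not already a path edge make explicit two points the paper leaves implicit.
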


\begin{proof}
Let
\[
\Phi:S_n\to S_n,\qquad \Phi(\pi)=J_n\pi J_n.
\]
By Lemma \ref{lem:prefix-suffix-conjugacy}, if
\(\pi_{i+1}=\pi_i t_k\) in the suffix-reversal Cayley graph, then $\Phi(\pi_{i+1}) = \Phi(\pi_i)r_k.$
Moreover \(\Phi(\id)=\id\). Hence the \(\Phi\)-image of Zaks' suffix-reversal
Hamiltonian path is a prefix-reversal Hamiltonian path whose transition word is
exactly \(W_n\). By Theorem \ref{thm:zaks-original}, this path visits every
element of \(S_n\) exactly once. Its terminal vertex is
\(\eval(W_n)=r_n\) by Lemma \ref{lem:endpoint}. Therefore \(W_n\)
generates a Hamiltonian path in \(\Pn\) from \(\id\) to \(r_n\).
If \(n\ge3\), then \(r_n\in R_n\), so \(\{r_n,\id\}\) is an edge of \(\Pn\).
Adding this edge gives a Hamiltonian cycle.
\end{proof}

\begin{definition}[The Zaks cycle]
For \(n\ge3\), let \(Z_n\) denote the Hamiltonian cycle obtained from the path generated by \(W_n\) by adding the closing edge \(\{r_n,\id\}\).
\end{definition}

\begin{remark}
The recursion also gives \(|W_n|=n!-1\): if \(|W_{n-1}|=(n-1)!-1\), then
\[
|W_n|
=
(n-1)\bigl(|W_{n-1}|+1\bigr)+|W_{n-1}|
=
n!-1.
\]
This agrees with the length asserted in Zaks' construction.
\end{remark}

\begin{remark}
The graph map \(\Phi(\pi)=J_n\pi J_n\) used in Theorem \ref{thm:zaks-path-prefix} is not asserted to be an automorphism of \(\Pn\). It is a bijection from the suffix-reversal Cayley graph to the prefix-reversal Cayley graph. The stabilizer computations below take place entirely inside the prefix-reversal graph \(\Pn\).
\end{remark}

% ==================================================
\section{The Visible Dihedral Symmetries}
\label{sec:dihedral}

We now prove the inclusion
\[
\langle L_\rho,L_{r_n}\rangle\le \Stab_{L(S_n)}(Z_n),
\qquad
\rho=r_{n-1}r_n.
\]
The two generators arise from distinct features of the recursive word \(W_n\): the element \(L_\rho\) rotates the recursive blocks, while \(L_{r_n}\) reverses the Hamiltonian path. Both assertions are consequences of identities at the level of words.
Throughout this section we write \(\rho=\rho_n=r_{n-1}r_n\). By Lemma \ref{lem:rho-cycle-early},
\[
\rho=[n,1,2,\ldots,n-1],
\]
and \(\ord(\rho)=n\).

\begin{lemma}[Palindromy]
\label{lem:palindromy}
For every \(n\ge2\), \(W_n^R=W_n\).
\end{lemma}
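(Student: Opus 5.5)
We argue by induction on \(n\), using only two facts about word reversal: it is an anti-homomorphism on concatenation, \((UV)^R=V^RU^R\), and a single letter is its own reversal.

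For the base case, \(W_2=r_2\) is a word of length one, so \(W_2^R=W_2\).

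For the inductive step, let \(n\ge3\) and assume \(W_{n-1}^R=W_{n-1}\). The embedding of \(S_{n-1}\) into \(S_n\) does not affect this, because it is an identity of formal words in the letters \(r_2,\ldots,r_{n-1}\).

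The key step is to write the recursion in an interleaved form that is visibly symmetric. Unfolding \(W_n=(W_{n-1}r_n)^{n-1}W_{n-1}\) gives
\[
W_n=W_{n-1}\,r_n\,W_{n-1}\,r_n\cdots r_n\,W_{n-1},
\]
with \(n\) copies of \(W_{n-1}\) separated by \(n-1\) copies of the letter \(r_n\). Reversing this concatenation reverses the order of the factors and reverses each factor. Each factor is either \(W_{n-1}\), fixed by the induction hypothesis, or the single letter \(r_n\), fixed trivially.

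The reversed word therefore has the same shape: \(n\) copies of \(W_{n-1}\) alternating with \(n-1\) copies of \(r_n\), beginning and ending with \(W_{n-1}\). Hence
\[
W_n^R=(W_{n-1}r_n)^{n-1}W_{n-1}=W_n.
\]

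For a fully formal version, we prove the identity \(\bigl((Ur)^{m}U\bigr)^R=(U^Rr)^{m}U^R\) for any word \(U\) and letter \(r\), by a short induction on \(m\). Then we apply it with \(U=W_{n-1}\), \(r=r_n\) and \(m=n-1\).

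There is no real obstacle here. The only point that needs care is the bookkeeping showing that \((Ur)^mU\) equals \(U(rU)^m\), so that the alternating pattern is symmetric about its middle.
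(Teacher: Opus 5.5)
Your proof is correct and takes the same route as the paper. Both argue by induction on \(n\), using the rebracketing \((W_{n-1}r_n)^{n-1}W_{n-1}=W_{n-1}(r_nW_{n-1})^{n-1}\) so that reversal applied factorwise, with \(W_{n-1}^R=W_{n-1}\) and \(r_n^R=r_n\), returns \(W_n\).
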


\begin{proof}
The assertion is immediate for \(n=2\). Suppose \(W_{n-1}^R=W_{n-1}\). Put \(A=W_{n-1}\) and \(b=r_n\). Then \(W_n=(Ab)^{n-1}A=A(bA)^{n-1}\). Taking the reversal of this word gives \(W_n^R = A^R(bA^R)^{n-1} = A(bA)^{n-1} = W_n\).
\end{proof}

The next lemma is independent of the pancake graph. It records the elementary cancellation principle behind the reflection \(L_{r_n}\).

\begin{lemma}[Reflection from a palindromic involutory word]
\label{lem:palindromic-reflection}
Let \(W=s_1s_2\cdots s_m\) be a word in involutions of a group \(G\), and assume \(W^R=W\). Let \(q=\eval(W)=s_1s_2\cdots s_m\), and let \(u_0=\id,\ u_i=s_1s_2\cdots s_i\quad(1\le i\le m)\) be the vertex sequence generated by \(W\). Then \(q\,u_i=u_{m-i} \quad (0\le i\le m)\).
\end{lemma}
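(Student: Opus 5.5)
The identity $q\,u_i=u_{m-i}$ should follow from a direct cancellation, using palindromy and the fact that each $s_j$ is an involution. There are three short steps.

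\textbf{Step 1: rewrite $q$ using palindromy.} The hypothesis $W^R=W$ means equality of words, so $s_j=s_{m+1-j}$ for every $1\le j\le m$. Since each $s_j$ is an involution, the inverse of a product is the reversed product of the same letters:
\[
q^{-1}=s_m^{-1}\cdots s_1^{-1}=s_m\cdots s_1=\eval(W^R)=\eval(W)=q .
\]
Hence $q$ is itself an involution, although we do not strictly need this.

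\textbf{Step 2: factor $q$ at position $i$.} Write $q=u_i\,(s_{i+1}\cdots s_m)$. Applying the palindromic relabelling $s_j=s_{m+1-j}$ to the tail gives
\[
s_{i+1}\cdots s_m=s_{m-i}\,s_{m-i-1}\cdots s_1 .
\]
By involutivity, the right-hand side equals $(s_1\cdots s_{m-i})^{-1}=u_{m-i}^{-1}$. Therefore $q=u_i\,u_{m-i}^{-1}$, and so $q\,u_{m-i}=u_i$ for all $0\le i\le m$.

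\textbf{Step 3: reindex.} Replacing $i$ by $m-i$ in the last identity gives $q\,u_i=u_{m-i}$, which is the claim. The boundary cases are consistent with this: $i=0$ gives $q\cdot\id=u_m=q$, and $i=m$ gives $q\,q=\id=u_0$, using Step 1.

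The only delicate point is the index bookkeeping in Step 2. One must check that reversing the tail $s_{i+1}\cdots s_m$ lands exactly on the prefix of length $m-i$. Under $j\mapsto m+1-j$, the index range $\{i+1,\dots,m\}$ maps to $\{1,\dots,m-i\}$, and the order of the product is reversed, which is what produces the inverse. No other obstacle is expected.
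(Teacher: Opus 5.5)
Your proof is correct and is essentially the paper's argument. Both use $s_j=s_{m+1-j}$ to turn a segment into a reversed segment and then cancel via involutivity: the paper rewrites $u_i$ as $s_m s_{m-1}\cdots s_{m-i+1}$ and cancels it directly against the tail of $q$, while you rewrite the tail of $q$ as $u_{m-i}^{-1}$ and then reindex (your Step~1 is indeed superfluous, since Step~2 already covers all $0\le i\le m$ with empty products read as the identity).
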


\begin{proof}
Since \(W^R=W\), the word \(s_1\cdots s_i\) is equal to the reversed terminal segment
\[
s_m s_{m-1}\cdots s_{m-i+1}.
\]
Therefore
\[
q\,u_i
=
(s_1\cdots s_m)(s_1\cdots s_i)
=
(s_1\cdots s_m)(s_m s_{m-1}\cdots s_{m-i+1}).
\]
The factors \(s_m,s_{m-1},\ldots,s_{m-i+1}\) cancel successively because each \(s_j\) is an involution. Hence
\[
q\,u_i=s_1\cdots s_{m-i}=u_{m-i}.
\]
\end{proof}

\begin{proposition}[Reflection symmetry]
\label{prop:reflection}
For every \(n\ge3\), \(L_{r_n}(Z_n)=Z_n\). More precisely, if \(u_0=\id,u_1,\ldots,u_{n!-1}=r_n\) is the Hamiltonian path generated by \(W_n\), then \(L_{r_n}(u_i)=u_{n!-1-i} \quad (0\le i\le n!-1)\).
\end{proposition}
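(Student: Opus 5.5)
The plan is to apply Lemma \ref{lem:palindromic-reflection} directly to the word \(W=W_n\) in the group \(G=S_n\). Each letter of \(W_n\) is some \(r_k\), and each \(r_k\) is an involution, so the hypotheses on letters hold. Lemma \ref{lem:palindromy} supplies \(W_n^R=W_n\). Lemma \ref{lem:endpoint} gives \(q=\eval(W_n)=r_n\). By the remark following the definition of \(Z_n\), the word length is \(m=n!-1\). Lemma \ref{lem:palindromic-reflection} then yields \(r_nu_i=u_{m-i}=u_{n!-1-i}\) for \(0\le i\le n!-1\), which is the ``more precisely'' statement.

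Next I deduce \(L_{r_n}(Z_n)=Z_n\) as a statement about the edge set of the cycle. By Theorem \ref{thm:zaks-path-prefix}, the edges of \(Z_n\) are the path edges \(\{u_i,u_{i+1}\}\) for \(0\le i\le n!-2\), together with the closing edge \(\{u_{n!-1},u_0\}=\{r_n,\id\}\).

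For a path edge, \(L_{r_n}\) sends \(\{u_i,u_{i+1}\}\) to \(\{u_{n!-1-i},u_{n!-2-i}\}\). Setting \(j=n!-2-i\), this is \(\{u_j,u_{j+1}\}\) with \(j\) in the same range, so it is again a path edge. For the closing edge, \(L_{r_n}\) sends \(\{r_n,\id\}\) to \(\{r_n^2,r_n\}=\{\id,r_n\}\), so the closing edge is fixed. Hence \(L_{r_n}\) maps the edge set of \(Z_n\) into itself. Since \(L_{r_n}\) is a bijection on vertices and on edges of \(\Pn\), it maps the edge set onto itself, so \(L_{r_n}(Z_n)=Z_n\).

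No real obstacle is expected. The only point needing care is that the embedded letters \(r_2,\ldots,r_{n-1}\), viewed in \(S_n\), are still involutions, and they are, since each fixes the positions beyond its prefix. Matching the index \(m=n!-1\) with the endpoint \(u_m=r_n\) is consistent with Lemma \ref{lem:endpoint}.
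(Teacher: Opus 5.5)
Your proposal is correct and follows essentially the same route as the paper. You feed palindromy (Lemma~\ref{lem:palindromy}) and \(\eval(W_n)=r_n\) (Lemma~\ref{lem:endpoint}) into Lemma~\ref{lem:palindromic-reflection}, then note that the closing edge \(\{r_n,\id\}\) is fixed setwise; your reindexing \(j=n!-2-i\) of the path edges only spells out the paper's phrase that \(L_{r_n}\) ``reverses the Hamiltonian path.''
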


\begin{proof}
By Lemma \ref{lem:palindromy}, \(W_n\) is palindromic. By Lemma \ref{lem:endpoint}, \(\eval(W_n)=r_n\). Applying Lemma \ref{lem:palindromic-reflection} to the word \(W_n\) gives \(r_nu_i=u_{n!-1-i}\) for every \(i\). Thus \(L_{r_n}\) reverses the Hamiltonian path generated by \(W_n\). The closing edge of \(Z_n\) is \(\{r_n,\id\}\); under \(L_{r_n}\), this edge is fixed setwise. Hence \(L_{r_n}\) preserves the closed Hamiltonian cycle \(Z_n\).
\end{proof}

We now turn to the rotational symmetry. Let \(M=(n-1)!\). Let \(v_0=\id,\ v_1,\ldots,v_{M-1}=r_{n-1}\) be the vertex sequence in the embedded copy of \(S_{n-1}\le S_n\) generated by \(W_{n-1}\). The equality \(v_{M-1}=r_{n-1}\) is Lemma \ref{lem:endpoint} applied in rank \(n-1\). The recursion \(W_n=(W_{n-1}r_n)^{n-1}W_{n-1}\) decomposes the Hamiltonian path generated by \(W_n\) into \(n\) ordered blocks \(P_0,P_1,\ldots,P_{n-1}\), each containing \(M\) vertices. Define these blocks by
\[
P_k=(u_{k,0},u_{k,1},\ldots,u_{k,M-1}),
\qquad
u_{k,j}=\rho^k v_j,
\]
for \(0\le k\le n-1,\ 0\le j\le M-1\).

\begin{lemma}[Block decomposition]
\label{lem:block-decomposition}
The ordered vertex sequence of the Hamiltonian path generated by \(W_n\) is \(P_0,P_1,\ldots,P_{n-1}\), where
\[
P_k=(\rho^k v_0,\rho^k v_1,\ldots,\rho^k v_{M-1})
\qquad(0\le k\le n-1).
\]
The internal edge-label word of each block is \(W_{n-1}\). The edge from the last vertex of \(P_k\) to the first vertex of \(P_{k+1}\), for \(0\le k\le n-2\), has label \(r_n\). The closing edge from the last vertex of \(P_{n-1}\) to the first vertex of \(P_0\) also has label \(r_n\).
\end{lemma}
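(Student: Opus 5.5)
The plan is to read the vertex sequence directly off the word decomposition \(W_n=(W_{n-1}r_n)^{n-1}W_{n-1}\) and compute the prefix products at block boundaries. We write the path as a concatenation of \(n\) copies of \(W_{n-1}\), separated by \(n-1\) occurrences of the letter \(r_n\). The \(k\)-th copy of \(W_{n-1}\) (indexed \(0\le k\le n-1\)) starts at the vertex
\[
g_k=\eval\bigl((W_{n-1}r_n)^k\bigr)=(r_{n-1}r_n)^k=\rho^k,
\]
using Lemma \ref{lem:endpoint} in rank \(n-1\) to replace \(\eval(W_{n-1})\) by \(r_{n-1}\). This is the key computation.

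Next we handle the vertices inside a block. Reading the \(k\)-th copy of \(W_{n-1}\) from the starting vertex \(\rho^k\), the vertex reached after its first \(j\) letters is \(\rho^k\) times the prefix product of the first \(j\) letters of \(W_{n-1}\), which is \(\rho^k v_j\). Hence the \(M\) vertices of the \(k\)-th segment are exactly \(\rho^k v_0,\ldots,\rho^k v_{M-1}=\rho^k r_{n-1}\), which is \(P_k\), and its internal labels are the letters of \(W_{n-1}\). Alternatively, the labels follow because \(\rho^k v_{j+1}=(\rho^k v_j)s\) with \(s\) the corresponding letter, and the canonical label is well defined. For \(k\le n-2\), the next letter in \(W_n\) is \(r_n\), and it leads from \(\rho^k r_{n-1}\) to \(\rho^k r_{n-1}r_n=\rho^{k+1}=\rho^{k+1}v_0\). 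So the connecting edge carries label \(r_n\) and lands on the first vertex of \(P_{k+1}\).

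Finally we treat the closing edge. The last vertex of \(P_{n-1}\) is \(\rho^{n-1}r_{n-1}=\eval(W_n)=r_n\), again by Lemma \ref{lem:endpoint}. The closing edge of \(Z_n\) is \(\{r_n,\id\}=\{r_n,r_n\cdot r_n\}\), whose label is \(r_n\), and \(\id=\rho^0v_0\) is the first vertex of \(P_0\). Counting gives \(nM=n!\) vertices, matching the Hamiltonian path of Theorem \ref{thm:zaks-path-prefix}, so the blocks exhaust the sequence.

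There is no real obstacle. The only care needed is the bookkeeping of the index offset: block \(P_k\) starts at position \(kM\) of the path, because each preceding block contributes \(M-1\) letters plus one connecting \(r_n\). The identity \(\rho^{n-1}r_{n-1}=r_n\) at the end follows from \(r_{n-1}=\rho r_n\) and \(\rho^n=\id\) (Lemma \ref{lem:rho-cycle-early}).
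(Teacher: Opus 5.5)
Your proposal is correct and follows essentially the same route as the paper. Both arguments take \(\eval((W_{n-1}r_n)^k)=\rho^k\) as the starting vertex of the \(k\)-th copy of \(W_{n-1}\), left-translate the rank-\((n-1)\) sequence \(v_j\) by \(\rho^k\), and check the connecting \(r_n\)-edges. The only cosmetic difference is at the closing edge: you identify it via \(\rho^{n-1}r_{n-1}=\eval(W_n)=r_n\), whereas the paper computes \(\rho^{n-1}r_{n-1}r_n=\rho^n=\id\) directly, which is the same identity.
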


\begin{proof}
After \(k\) full copies of the word \(W_{n-1}r_n\), the accumulated group element is
\[
\eval((W_{n-1}r_n)^k)
=
(r_{n-1}r_n)^k
=
\rho^k.
\]
Thus the \(k\)-th copy of \(W_{n-1}\) begins at \(\rho^k\). Since right multiplication by the letters of \(W_{n-1}\) gives the sequence \(v_0,v_1,\ldots,v_{M-1}\) from the identity, the corresponding block beginning at \(\rho^k\) is \(\rho^k v_0,\rho^k v_1,\ldots,\rho^k v_{M-1}\). This proves the asserted block form and the assertion about internal labels.

The last vertex of \(P_k\) is \(u_{k,M-1}=\rho^k r_{n-1}\). Multiplying on the right by \(r_n\) gives \(\rho^k r_{n-1}r_n = \rho^k\rho = \rho^{k+1}\). For \(0\le k\le n-2\), this is the first vertex of \(P_{k+1}\), and the connecting edge has label \(r_n\). For \(k=n-1\), the same computation gives \(\rho^{n-1}r_{n-1}r_n = \rho^n = \id\), which is the first vertex of \(P_0\). This is precisely the closing edge of \(Z_n\).
\end{proof}

\begin{proposition}[Rotational symmetry]
\label{prop:rotation}
For every \(n\ge3\), \(L_\rho(Z_n)=Z_n\). More precisely, \(L_\rho\) sends \(P_k\) to \(P_{k+1}\), with indices taken modulo \(n\), and therefore acts on the cyclic ordering of \(Z_n\) as a shift by \((n-1)!\) vertices.
\end{proposition}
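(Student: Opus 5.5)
The plan is to read everything off Lemma~\ref{lem:block-decomposition}. The cycle $Z_n$, traversed in its natural cyclic order, is the concatenation $P_0,P_1,\ldots,P_{n-1}$, closed back to $P_0$, where $P_k=(\rho^k v_0,\ldots,\rho^k v_{M-1})$. First I will write the $N=n!$ vertices in the cyclic order
\[
w_{kM+j}=\rho^k v_j,\qquad 0\le k\le n-1,\ 0\le j\le M-1,
\]
with indices taken modulo $N=nM$. The edge set of $Z_n$ is then exactly $\{\{w_i,w_{i+1}\}: i\in\mathbb{Z}/N\}$. That every such pair is an edge, and that these are all the edges, is the content of Lemma~\ref{lem:block-decomposition}: it covers the internal block edges, the $r_n$-connecting edges, and the closing edge.

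Next I will compute the action of $L_\rho$ on this sequence. For $k\le n-2$,
\[
L_\rho(w_{kM+j})=\rho^{k+1}v_j=w_{(k+1)M+j}.
\]
For $k=n-1$ we get $\rho^n v_j=v_j=w_j$, using $\ord(\rho)=n$ from Lemma~\ref{lem:rho-cycle-early}. So $L_\rho(w_i)=w_{i+M}$ for every $i$ modulo $N$, which says precisely that $L_\rho(P_k)=P_{k+1}$ with indices mod $n$.

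It follows that $L_\rho$ maps each cycle edge $\{w_i,w_{i+1}\}$ to $\{w_{i+M},w_{i+M+1}\}$, which is again a cycle edge. Since $L_\rho$ is a bijection on vertices, it therefore maps the edge set of $Z_n$ onto itself, so $L_\rho(Z_n)=Z_n$. It does so by shifting the cyclic order by $M=(n-1)!$ positions.

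The only point requiring care, and the expected obstacle, is the wraparound from the last block $P_{n-1}$ back to $P_0$. Two facts handle it. The identity $\rho^n=\id$ handles the vertices. The closing edge $\{r_n,\id\}=\{\rho^{n-1}v_{M-1},\,v_0\}$ is the one joining $P_{n-1}$ to $P_0$, and Lemma~\ref{lem:block-decomposition} already identifies it as part of the same cyclic pattern. With both in hand, the indexing modulo $N$ is consistent and no separate case is needed. As a sanity check, one could also note that $L_\rho$ preserves edge labels, so the image of the label sequence, $(W_{n-1}r_n)^n$ read cyclically, is its own cyclic shift by $M$; this is consistent with the argument.
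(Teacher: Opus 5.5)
Your proof is correct and takes essentially the same approach as the paper. Both compute $L_\rho(\rho^k v_j)=\rho^{k+1}v_j$ with $k$ read modulo $n$, and both invoke Lemma~\ref{lem:block-decomposition} to see that the internal, connecting, and closing edges are carried onto edges of $Z_n$; your linear indexing $w_{kM+j}$ and the explicit use of $\rho^n=\id$ simply make the $(n-1)!$-shift and the wraparound slightly more transparent.
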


\begin{proof}
For every \(k,j\), \(L_\rho(u_{k,j}) = \rho(\rho^k v_j) = \rho^{k+1}v_j = u_{k+1,j}\), where \(k+1\) is read modulo \(n\). Thus \(L_\rho\) sends the ordered block \(P_k\) to the ordered block \(P_{k+1}\). By Lemma \ref{lem:block-decomposition}, this preserves all internal block edges, all connecting edges, and the closing edge. Hence \(L_\rho\) preserves \(Z_n\). Since each block has \((n-1)!\) vertices, the induced action on the cyclic vertex ordering of \(Z_n\) is a shift by \((n-1)!\) positions.
\end{proof}

\begin{lemma}
\label{lem:dihedral-relations}
For every \(n\ge3\), \(\langle L_\rho,L_{r_n}\rangle\cong D_n\), where \(D_n\) denotes the dihedral group of order \(2n\).
\end{lemma}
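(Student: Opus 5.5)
The map $g\mapsto L_g$ is an injective homomorphism $S_n\to\operatorname{Sym}(S_n)$: $L_gL_h=L_{gh}$, and $L_g=\id$ forces $g=g\cdot\id=\id$. So $\langle L_\rho,L_{r_n}\rangle\cong\langle\rho,r_n\rangle\le S_n$, and it suffices to show that the subgroup $\langle\rho,r_n\rangle$ of $S_n$ is dihedral of order $2n$. We use the standard presentation $D_n=\langle x,y\mid x^n=y^2=1,\ yxy=x^{-1}\rangle$. We verify the three relations for $x=\rho$, $y=r_n$, and then check that the resulting quotient map is faithful.

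\textbf{Relations.} By Lemma \ref{lem:rho-cycle-early}, $\rho$ is an $n$-cycle of order exactly $n$, and $r_n$ is an involution. For the conjugation relation, write $\rho=r_{n-1}r_n$, so that $\rho^{-1}=r_nr_{n-1}$ because both factors are involutions. Then
\[
r_n\rho r_n=r_n r_{n-1} r_n r_n=r_nr_{n-1}=\rho^{-1}.
\]
Hence $x\mapsto\rho$, $y\mapsto r_n$ defines a surjective homomorphism $D_n\to\langle\rho,r_n\rangle$. In particular $|\langle\rho,r_n\rangle|\le 2n$, and every element has the form $\rho^k$ or $\rho^kr_n$ with $0\le k<n$.

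\textbf{Faithfulness.} It remains to show that these $2n$ elements are distinct, i.e.\ that $r_n\notin\langle\rho\rangle$. The elements $\rho^k$ are pairwise distinct since $\ord(\rho)=n$, and then $r_n\notin\langle\rho\rangle$ makes the two cosets disjoint. There are two short arguments.

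\emph{Parity.} Every power of $\rho$ other than $\id$ is fixed-point-free, since $\rho$ is an $n$-cycle. For $n$ odd, $r_n$ fixes the middle position $(n+1)/2$ and $r_n\neq\id$, so $r_n\notin\langle\rho\rangle$. For $n$ even, the only involution in the cyclic group $\langle\rho\rangle$ is $\rho^{n/2}$. This is the map $i\mapsto i-n/2 \pmod n$, whereas $r_n(1)=n\neq\rho^{n/2}(1)$ once $n\ge 3$. Since $n\ge3$ is even we have $n\ge4$, so $n/2\ne n-1$ and the two maps differ.

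\emph{Uniform check.} Alternatively, compute directly. We have $\rho^k(1)=n+1-k$ for $1\le k\le n-1$ and $\rho^k(2)=n+2-k$ for $2\le k\le n-1$. Agreement with $r_n$ at $1$ forces $k=1$, but $\rho(2)=1\ne n-1=r_n(2)$ since $n\ge3$.

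Either argument gives $|\langle\rho,r_n\rangle|=2n$, so the surjection $D_n\to\langle\rho,r_n\rangle$ is an isomorphism. Transporting along $g\mapsto L_g$ gives $\langle L_\rho,L_{r_n}\rangle\cong D_n$.

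The only step requiring any care is showing $r_n\notin\langle\rho\rangle$; the relations themselves are immediate from $\rho=r_{n-1}r_n$ and the fact that the prefix reversals are involutions.
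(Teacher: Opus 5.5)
Your proposal is correct and follows essentially the same route as the paper: you verify the dihedral relations via $r_n\rho r_n=r_nr_{n-1}=\rho^{-1}$ to get a quotient of $D_n$, then rule out $r_n\in\langle\rho\rangle$. Your ``uniform check'' (comparing images of $1$ and then $2$) is exactly the paper's argument; the parity alternative and the explicit faithfulness of $g\mapsto L_g$ are valid but inessential additions.
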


\begin{proof}
By Lemma \ref{lem:rho-cycle-early}, \(L_\rho\) has order \(n\), and \(L_{r_n}\) has order \(2\). Moreover, \(L_{r_n}L_\rho L_{r_n} = L_{r_n\rho r_n}\). Since
\[
r_n\rho r_n
=
r_n(r_{n-1}r_n)r_n
=
r_nr_{n-1}
=
(r_{n-1}r_n)^{-1}
=
\rho^{-1},
\]
we have \(L_{r_n}L_\rho L_{r_n}=L_{\rho^{-1}}\). Thus \(\langle L_\rho,L_{r_n}\rangle\) is a quotient of the dihedral group of order \(2n\). It remains only to note that \(L_{r_n}\notin\langle L_\rho\rangle\). Equivalently, \(r_n\notin\langle \rho\rangle\). If \(\rho^k=r_n\), then comparing the image of \(1\) gives \(k\equiv1\pmod n\), because \(\rho^k(1)=r_n(1)=n\). But then \(\rho^k=\rho\), while \(\rho(2)=1 \text{ and } r_n(2)=n-1\), which are unequal for \(n\ge3\). Hence \(\langle L_\rho\rangle\) and \(L_{r_n}\langle L_\rho\rangle\) are distinct cosets, and the generated group has order \(2n\).
\end{proof}

Combining Propositions \ref{prop:reflection} and \ref{prop:rotation} with Lemma \ref{lem:dihedral-relations} gives the lower bound needed for the main theorem.

\begin{corollary}[Visible dihedral subgroup]
\label{cor:visible-dihedral}
For every \(n\ge3\), \(\langle L_\rho,L_{r_n}\rangle \le \Stab_{L(S_n)}(Z_n)\), and this subgroup is isomorphic to \(D_n\).
\end{corollary}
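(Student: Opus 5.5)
The corollary follows by assembling results already proved; nothing new needs to be computed. The plan is to show that the two generators lie in the stabilizer, use that the stabilizer is a subgroup, and then identify the generated group via Lemma \ref{lem:dihedral-relations}.

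First, the set \(\Stab_{L(S_n)}(Z_n)=\{L_g : L_g(Z_n)=Z_n\}\) is a subgroup of \(L(S_n)\). It contains \(L_{\id}\). It is closed under composition, since \(L_gL_h=L_{gh}\) and \(L_{gh}(Z_n)=L_g(L_h(Z_n))=L_g(Z_n)=Z_n\). It is closed under inverses: \(L_g\) is a bijection on the finite set of Hamiltonian cycles of \(\Pn\), so \(L_g(Z_n)=Z_n\) forces \(L_{g^{-1}}(Z_n)=Z_n\). Here \(Z_n\) should be read as a set of edges, or equivalently as an unoriented cyclic vertex sequence. Propositions \ref{prop:rotation} and \ref{prop:reflection} show that this is exactly the sense in which \(L_\rho\) and \(L_{r_n}\) preserve it: both maps send edges of \(Z_n\) to edges of \(Z_n\), the first preserving the cyclic orientation and the second reversing it.

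Next, by Proposition \ref{prop:rotation} we have \(L_\rho\in\Stab_{L(S_n)}(Z_n)\), and by Proposition \ref{prop:reflection} we have \(L_{r_n}\in\Stab_{L(S_n)}(Z_n)\). Since the stabilizer is a subgroup, the subgroup \(\langle L_\rho,L_{r_n}\rangle\) generated by these two elements is contained in it. Finally, Lemma \ref{lem:dihedral-relations} states that this subgroup is isomorphic to \(D_n\), of order \(2n\).

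The only step requiring any care is the first: the notion of invariance must be consistent across the two propositions. \(L_{r_n}\) reverses the traversal direction while \(L_\rho\) does not, so the stabilizer has to be defined for \(Z_n\) as an undirected edge set. With that convention the subgroup property holds as argued, and everything else is a direct citation.
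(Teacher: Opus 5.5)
Your proposal is correct and matches the paper's argument: Propositions~\ref{prop:rotation} and~\ref{prop:reflection} put \(L_\rho\) and \(L_{r_n}\) in the stabilizer, and Lemma~\ref{lem:dihedral-relations} identifies the group they generate as \(D_n\). Your explicit check that the stabilizer is a subgroup, with \(Z_n\) read as an unoriented edge set, spells out what the paper leaves implicit.
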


% ==================================================
\subsection{The cyclic label word}
\label{subsec:cyclic-label-word}

Let $\calT_n=W_nr_n$ be the cyclic edge-label word of $Z_n$, based at $\id$. From $W_n=(W_{n-1}r_n)^{n-1}W_{n-1}$ we obtain $\calT_n=(W_{n-1}r_n)^n$. Thus the label word is $n$-periodic at the block level, with fundamental block $W_{n-1}r_n$ of length $(n-1)!$. This is the label-level form of the block rotation in Proposition~\ref{prop:rotation}.

\begin{example}[Rank $4$]
\label{ex:rank-four-label-word}
For $n=4$, we have $W_3=r_2r_3r_2r_3r_2$, and hence $\calT_4=(W_3r_4)^4=(r_2r_3r_2r_3r_2r_4)^4$. Thus the cycle has four blocks, each with six vertices. The rotation is induced by $\rho=r_3r_4=[4,1,2,3]$, and $L_\rho$ advances the cycle by one block. The reflection $L_{r_4}$ reverses the Hamiltonian path. Consequently the visible left-regular stabilizer is $\langle L_{r_3r_4},L_{r_4}\rangle\cong D_4$.

This explicit label word will be used in Section~\ref{sec:automorphisms} to exclude the exceptional non-left automorphism of $\mathcal P_4$. In $\calT_4$, every occurrence of $r_4$ is flanked on both sides by $r_2$.
\end{example}

% ==================================================
\section{Cyclic-Order Rigidity and the Left-Regular Stabilizer}
\label{sec:rigidity}

The lower bound in Corollary \ref{cor:visible-dihedral} uses the particular recursion defining \(W_n\). The reverse inclusion is a general fact about cyclic orderings of finite groups. We isolate it in a form independent of the pancake graph.

Let \(X\) be a finite set with \(|X|=N\ge3\). An oriented cyclic ordering of \(X\) is an equivalence class of ordered \(N\)-tuples
\[
(x_0,x_1,\ldots,x_{N-1})
\]
under cyclic rotation. Its reversal is the opposite oriented cyclic ordering. In this section, a cyclic order means the corresponding unoriented object, i.e. the equivalence class also modulo reversal. Equivalently, a cyclic order is a Hamiltonian cycle on vertex set \(X\).

The automorphism group of such an unoriented cyclic order is the dihedral group of order \(2N\). Its orientation-preserving elements are the cyclic shifts, and its orientation-reversing elements are the reflections. A permutation of \(X\) preserves the cyclic order if and only if it acts as an automorphism of this Hamiltonian cycle.

For \(a\in G\), put
\[
N_G^{\pm}(a)=\{g\in G:gag^{-1}\in\{a,a^{-1}\}\}.
\]
We call this subgroup the dihedral normalizer of \(a\).

\begin{theorem}[Left-regular rigidity from one rotation]
\label{thm:cyclic-order-rigidity}
Let \(G\) be a finite group, let \(a\in G\) have order at least \(3\), and let
\(C\) be an unoriented cyclic order on the set \(G\). If \(L_a\) preserves
\(C\), then
\[
\Stab_{L(G)}(C)\le L(N_G^{\pm}(a)).
\]
Equivalently, if \(L_b\) preserves \(C\), then
\[
bab^{-1}\in\{a,a^{-1}\}.
\]
\end{theorem}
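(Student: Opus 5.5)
The plan is to work entirely inside the automorphism group of the cycle $C$ and use the structure of the dihedral group. Put $N=|G|$. Since $a$ has order at least $3$, we have $N\ge\ord(a)\ge3$, so the discussion preceding the theorem applies. The group $D=\Aut(C)$ of permutations of $G$ preserving $C$ is therefore dihedral of order $2N$. Fix an orientation $(x_0,\ldots,x_{N-1})$ of $C$. Let $\sigma$ be the shift $x_i\mapsto x_{i+1}$ (indices mod $N$), and let $R=\langle\sigma\rangle$ be the cyclic subgroup of rotations. Every element of $D\setminus R$ is a reflection and has order exactly $2$.

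The first step is to show that $L_a$ lies in $R$. By hypothesis $L_a\in D$. The map $g\mapsto L_g$ is an injective homomorphism $G\to\mathrm{Sym}(G)$, so $L_a$ has order $\ord(a)\ge3$. Reflections have order $2$, hence $L_a$ is not a reflection, and so $L_a=\sigma^s$ for some $s$. This is the only place where the hypothesis $\ord(a)\ge3$ is used. I expect it to be the step that needs the most care. One must note that for $N\ge3$ every non-rotation in $D$ is a genuine involution, including the reflections through vertices, which fix points. No fixed-point-freeness argument is needed, because the order alone decides the question.

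The second step is the standard dihedral conjugation rule. For any $\tau\in D$, either $\tau\in R$, in which case $\tau$ commutes with $\sigma^s$, or $\tau$ is a reflection, in which case $\tau\sigma\tau^{-1}=\sigma^{-1}$ and hence $\tau\sigma^s\tau^{-1}=\sigma^{-s}$. So in both cases $\tau L_a\tau^{-1}\in\{L_a,L_a^{-1}\}$. I will verify the reflection relation directly: a reflection has the form $x_i\mapsto x_{c-i}$, and conjugating the shift by it gives $x_i\mapsto x_{i-1}$.

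Finally, suppose $L_b$ preserves $C$. Then $L_b\in D$, and by the second step $L_bL_aL_b^{-1}\in\{L_a,L_a^{-1}\}$. The left side equals $L_{bab^{-1}}$ and the right side equals $\{L_a,L_{a^{-1}}\}$. Since $g\mapsto L_g$ is injective, $bab^{-1}\in\{a,a^{-1}\}$, that is, $b\in N_G^{\pm}(a)$. This gives $\Stab_{L(G)}(C)\le L(N_G^{\pm}(a))$. I will also note in passing that $N_G^{\pm}(a)$ is a subgroup, namely the preimage of $\{a,a^{-1}\}$-stabilization under conjugation, although the inclusion itself does not depend on this.
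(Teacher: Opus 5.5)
Your proposal is correct and follows the paper's proof essentially step for step. Both arguments show that $L_a$ must be a rotation of $C$ because its order is at least $3$. Both then use the fact that every element of the dihedral automorphism group of $C$ either commutes with or inverts a rotation. Both finish with faithfulness of the left regular action, which turns $L_bL_aL_b^{-1}\in\{L_a,L_a^{-1}\}$ into $bab^{-1}\in\{a,a^{-1}\}$. Your explicit check of $\tau\sigma\tau^{-1}=\sigma^{-1}$ for a reflection $x_i\mapsto x_{c-i}$ is a detail the paper leaves implicit.
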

\begin{proof}
Let \(N=|G|\). The permutations of \(G\) preserving \(C\) form a dihedral group acting on the \(N\) vertices of the cycle. Since \(\ord(L_a)=\ord(a)\ge3\), \(L_a\) cannot be a reflection; every reflection in a finite cycle has order \(2\). Thus \(L_a\) is a rotation of \(C\).

If \(L_b\) also preserves \(C\), then \(L_b\) is either a rotation or a reflection of the same cycle. In the dihedral group of a cycle, rotations commute with all rotations, while reflections invert every rotation. Hence $L_bL_aL_b^{-1}\in\{L_a,L_a^{-1}\}.$
The left regular action is faithful, and therefore $bab^{-1}\in\{a,a^{-1}\}.$
\end{proof}

\begin{remark}
The subgroup \(N_G^{\pm}(a)\) is generally smaller than the full normalizer
\(N_G(\langle a\rangle)\). Indeed,
\[
N_G^{\pm}(a)
=
C_G(a)\cup\{g\in G:gag^{-1}=a^{-1}\},
\]
where \(C_G(a)\) denotes the centralizer of \(a\). Thus \(N_G^{\pm}(a)\)
records only those conjugations that preserve the cyclic order generated by
\(a\), up to reversal.

In particular, if \(a\in S_n\) is an \(n\)-cycle, then
\[
|N_{S_n}(\langle a\rangle)|=n\varphi(n),
\qquad
|N_{S_n}^{\pm}(a)|=2n
\quad(n\ge3).
\]
\end{remark}

\begin{lemma}
\label{lem:dihedral-normalizer-rho}
Let \(\rho=[n,1,2,\ldots,n-1]\in S_n, \quad n\ge3\). Then \(|N_{S_n}^{\pm}(\rho)|=2n\). More precisely, \(N_{S_n}^{\pm}(\rho)=C_{S_n}(\rho)\sqcup r_nC_{S_n}(\rho)\), and \(C_{S_n}(\rho)=\langle\rho\rangle\).
\end{lemma}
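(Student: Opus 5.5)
We need three facts: $C_{S_n}(\rho)=\langle\rho\rangle$; the elements inverting $\rho$ form exactly the coset $r_nC_{S_n}(\rho)$; and hence $|N_{S_n}^{\pm}(\rho)|=2n$.

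\emph{Step 1: the centralizer.} Since $\rho$ is an $n$-cycle (Lemma \ref{lem:rho-cycle-early}), $\langle\rho\rangle$ acts regularly on $[n]$, so $C_{S_n}(\rho)\supseteq\langle\rho\rangle$. For the reverse inclusion, let $g$ commute with $\rho$. Then $g(\rho^k(n))=\rho^k(g(n))$ for all $k$. Choose $k_0$ with $\rho^{k_0}(n)=g(n)$, which exists by transitivity. Then $g$ and $\rho^{k_0}$ agree on the whole orbit $\{\rho^k(n)\}=[n]$, so $g=\rho^{k_0}$. Therefore $C_{S_n}(\rho)=\langle\rho\rangle$, which has order $n$.

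\emph{Step 2: the inverting set is a coset.} Let $I=\{g:g\rho g^{-1}=\rho^{-1}\}$. Lemma \ref{lem:dihedral-relations} computed $r_n\rho r_n=\rho^{-1}$, so $r_n\in I$. For any $g\in I$, the element $r_n^{-1}g$ conjugates $\rho$ to $r_n\rho^{-1}r_n=\rho$, so it lies in $C_{S_n}(\rho)$. Conversely, every $r_nc$ with $c\in C_{S_n}(\rho)$ inverts $\rho$. Hence $I=r_nC_{S_n}(\rho)$.

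\emph{Step 3: disjointness and counting.} By the remark following Theorem \ref{thm:cyclic-order-rigidity}, $N_{S_n}^{\pm}(\rho)=C_{S_n}(\rho)\cup I$. The union is disjoint because $n\ge3$ gives $\rho\ne\rho^{-1}$, so no element can both fix and invert $\rho$. (Alternatively, $r_n\notin\langle\rho\rangle$ was shown in Lemma \ref{lem:dihedral-relations}.) Therefore $|N_{S_n}^{\pm}(\rho)|=2n$.

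The only step with real content is Step 1, the regular-orbit argument for the centralizer. It is standard, and no serious obstacle is expected.
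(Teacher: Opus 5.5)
Your proof is correct and follows the paper's argument essentially step for step: the inverting set is identified with the coset $r_nC_{S_n}(\rho)$ via $r_n\rho r_n=\rho^{-1}$, and the union is disjoint because $\rho\ne\rho^{-1}$ when $n\ge3$. The one difference is that you actually prove $C_{S_n}(\rho)=\langle\rho\rangle$ with the regular-orbit argument, whereas the paper simply asserts it as the standard fact about centralizers of $n$-cycles.
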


\begin{proof}
Since \(\rho\) is an \(n\)-cycle, its centralizer in \(S_n\) is the cyclic group it generates: \(C_{S_n}(\rho)=\langle\rho\rangle\), of order \(n\). By Lemma \ref{lem:dihedral-relations}, \(r_n\rho r_n=\rho^{-1}\). Thus \(r_nC_{S_n}(\rho) \subseteq \{g\in S_n:g\rho g^{-1}=\rho^{-1}\}\). Conversely, if \(g\rho g^{-1}=\rho^{-1}\), then \(r_ng\rho g^{-1}r_n = r_n\rho^{-1}r_n = \rho\). Hence \(r_ng\in C_{S_n}(\rho)\), so \(g\in r_nC_{S_n}(\rho)\). Therefore \(\{g\in S_n:g\rho g^{-1}=\rho^{-1}\} = r_nC_{S_n}(\rho)\). The two cosets \(C_{S_n}(\rho) \quad\text{and}\quad r_nC_{S_n}(\rho)\) are disjoint. If they were not, then some element of \(C_{S_n}(\rho)\) would conjugate \(\rho\) to \(\rho^{-1}\). But every element of \(C_{S_n}(\rho)\) commutes with \(\rho\), so this would imply \(\rho=\rho^{-1}\), contradicting \(\ord(\rho)=n\ge3\). Hence \(|N_{S_n}^{\pm}(\rho)| = |C_{S_n}(\rho)|+|r_nC_{S_n}(\rho)| = 2n\).
\end{proof}

We now apply the preceding lemma to the cyclic ordering defined by \(Z_n\). Recall that \(\Stab_{L(S_n)}(Z_n) = \{L_g\in L(S_n):L_g(Z_n)=Z_n\}\). Equivalently, after identifying \(L(S_n)\) with \(S_n\), this stabilizer consists of all \(g\in S_n\) for which left multiplication by \(g\) preserves the edge set, and hence the cyclic ordering, of the Hamiltonian cycle \(Z_n\).

\begin{theorem}[Exact left-regular stabilizer]
\label{thm:left-stabilizer}
For every \(n\ge3\), \(\Stab_{L(S_n)}(Z_n) = \langle L_\rho,L_{r_n}\rangle \cong D_n\), where \(\rho=r_{n-1}r_n=[n,1,2,\ldots,n-1]\).
\end{theorem}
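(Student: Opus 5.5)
The proof is a two-sided inclusion argument that assembles results already established. For the lower bound, Corollary~\ref{cor:visible-dihedral} gives \(\langle L_\rho,L_{r_n}\rangle\le\Stab_{L(S_n)}(Z_n)\), and Lemma~\ref{lem:dihedral-relations} shows this subgroup has order \(2n\) and is isomorphic to \(D_n\). So the only work is the reverse inclusion, and I will get it by a cardinality comparison rather than by identifying each stabilizing element directly.

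For the upper bound, view \(Z_n\) as an unoriented cyclic order \(C\) on the set \(G=S_n\). This is legitimate because \(Z_n\) is a Hamiltonian cycle on all \(n!\ge 6\) vertices by Theorem~\ref{thm:zaks-path-prefix}. A left translation preserves the edge set of \(Z_n\) exactly when it preserves \(C\), so \(\Stab_{L(S_n)}(Z_n)=\Stab_{L(S_n)}(C)\). Take \(a=\rho\). By Lemma~\ref{lem:rho-cycle-early}, \(\ord(\rho)=n\ge3\), and by Proposition~\ref{prop:rotation}, \(L_\rho\) preserves \(Z_n\). Theorem~\ref{thm:cyclic-order-rigidity} then gives
\[
\Stab_{L(S_n)}(Z_n)\le L\bigl(N_{S_n}^{\pm}(\rho)\bigr).
\]
By Lemma~\ref{lem:dihedral-normalizer-rho} and faithfulness of the left regular action, the right-hand side has order exactly \(2n\).

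Putting these together, we have the chain
\[
\langle L_\rho,L_{r_n}\rangle\le\Stab_{L(S_n)}(Z_n)\le L\bigl(N_{S_n}^{\pm}(\rho)\bigr),
\]
in which the two outer groups both have order \(2n\). So all three groups coincide. As a consistency check, which the argument does not need, Lemma~\ref{lem:dihedral-normalizer-rho} identifies \(N_{S_n}^{\pm}(\rho)=\langle\rho\rangle\sqcup r_n\langle\rho\rangle=\langle\rho,r_n\rangle\) directly.

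I expect no real obstacle. The one point that needs care is the identification of "preserving \(Z_n\)" for \(L_g\) with "preserving the cyclic order \(C\)". This is exactly the equivalence stated in the paragraph before Theorem~\ref{thm:left-stabilizer}: a permutation of \(G\) preserves the unoriented cyclic order iff it is an automorphism of the Hamiltonian cycle. The hypothesis \(N\ge3\) of that setup holds since \(n!\ge6\). The rest is bookkeeping with orders.
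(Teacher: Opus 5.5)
Your proposal is correct and follows essentially the same argument as the paper. The lower bound comes from Corollary~\ref{cor:visible-dihedral} and Lemma~\ref{lem:dihedral-relations}. The upper bound comes from Theorem~\ref{thm:cyclic-order-rigidity} applied to \(a=\rho\), which is invariant by Proposition~\ref{prop:rotation}, and the bound \(|N_{S_n}^{\pm}(\rho)|=2n\) from Lemma~\ref{lem:dihedral-normalizer-rho} then closes the sandwich by counting orders.
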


\begin{proof}
By Corollary \ref{cor:visible-dihedral}, \(\langle L_\rho,L_{r_n}\rangle \le \Stab_{L(S_n)}(Z_n)\), and this subgroup has order \(2n\). It remains to prove that no further left translations preserve \(Z_n\). Let \(L_g\in\Stab_{L(S_n)}(Z_n)\). By Proposition \ref{prop:rotation}, \(L_\rho\) preserves the cyclic ordering of \(Z_n\). Since \(\ord(\rho)=n\ge3\), Theorem \ref{thm:cyclic-order-rigidity}, applied to the group \(S_n\), the element \(\rho\), and the cyclic ordering induced by \(Z_n\), gives \[ g\rho g^{-1}\in\{\rho,\rho^{-1}\}. \] Thus \(g\in N_{S_n}^{\pm}(\rho)\). By Lemma \ref{lem:dihedral-normalizer-rho}, \(|N_{S_n}^{\pm}(\rho)|=2n\). Therefore \(|\Stab_{L(S_n)}(Z_n)|\le 2n\). The visible subgroup already has order \(2n\), so equality holds: \(\Stab_{L(S_n)}(Z_n) = \langle L_\rho,L_{r_n}\rangle\). The isomorphism with \(D_n\) is Lemma \ref{lem:dihedral-relations}.
\end{proof}

The proof actually gives the following more precise description of the two cosets in the stabilizer.

\begin{corollary}
\label{cor:two-cosets}
For every \(n\ge3\),
\[
\Stab_{L(S_n)}(Z_n)
=
\{L_{\rho^i}:0\le i<n\}
\sqcup
\{L_{r_n\rho^i}:0\le i<n\}.
\]
The first coset consists of the orientation-preserving symmetries of the cyclic ordering of \(Z_n\), and the second consists of the orientation-reversing symmetries.
\end{corollary}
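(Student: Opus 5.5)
The plan is to read off the set decomposition from Theorem~\ref{thm:left-stabilizer} and Lemma~\ref{lem:dihedral-relations}, and then settle the orientation claims by looking at how each element acts on the explicit cyclic vertex ordering of \(Z_n\).

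\textbf{Step 1: the two cosets.} By Theorem~\ref{thm:left-stabilizer} the stabilizer equals \(\langle L_\rho,L_{r_n}\rangle\). The proof of Lemma~\ref{lem:dihedral-relations} shows that this group is the disjoint union of \(\langle L_\rho\rangle\) and \(L_{r_n}\langle L_\rho\rangle\). Since \(\ord(\rho)=n\), the elements \(L_{\rho^i}\) for \(0\le i<n\) are pairwise distinct. Since \(L\) is a homomorphism, \(L_{r_n}L_{\rho^i}=L_{r_n\rho^i}\). This gives the displayed disjoint union. Equivalently, one may cite Lemma~\ref{lem:dihedral-normalizer-rho}, which gives \(N^{\pm}_{S_n}(\rho)=\langle\rho\rangle\sqcup r_n\langle\rho\rangle\).

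\textbf{Step 2: orientation.} Fix the oriented cyclic ordering \((u_0,\dots,u_{n!-1})\) of \(Z_n\), closed up by the edge \(\{r_n,\id\}\).
\begin{itemize}
\item By Proposition~\ref{prop:rotation}, \(L_\rho\) sends \(u_i\) to \(u_{i+(n-1)!}\), with indices taken mod \(n!\). This is a rotation, so every power \(L_{\rho^i}\) is also a rotation, i.e.\ orientation-preserving.
\item By Proposition~\ref{prop:reflection}, \(L_{r_n}\) sends \(u_i\) to \(u_{n!-1-i}\). This map reverses the cyclic order; on indices mod \(n!\) it is \(i\mapsto -1-i\), a reflection.
\item It follows that \(L_{r_n\rho^i}=L_{r_n}L_{\rho^i}\), a reflection composed with a rotation, is again a reflection, hence orientation-reversing.
\end{itemize}

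\textbf{Step 3: the two classes are distinct.} The only point requiring care is that the orientation class of each element is well defined. An automorphism of a cycle with \(N=n!\ge 6\) vertices cannot be both a rotation and a reflection. Indeed, \(i\mapsto i+c\) and \(i\mapsto c'-i\) agree for all \(i\) only if \(2i\equiv \text{const}\pmod N\) for every \(i\), which fails for \(N\ge3\). Consequently the first coset consists exactly of the orientation-preserving symmetries and the second exactly of the orientation-reversing ones.
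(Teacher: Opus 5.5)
Your proposal is correct and follows essentially the same route as the paper: the set decomposition comes from Theorem~\ref{thm:left-stabilizer} together with the coset decomposition in Lemma~\ref{lem:dihedral-relations}, and the orientation claims come from Propositions~\ref{prop:rotation} and~\ref{prop:reflection}. Your Step~3 spells out something the paper leaves implicit: no automorphism of a cycle on $N\ge3$ vertices is both a rotation and a reflection, which is what makes each coset exactly one orientation class.
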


\begin{proof}
The equality as a set follows from
\[
\langle L_\rho,L_{r_n}\rangle
=
\langle L_\rho\rangle
\sqcup
L_{r_n}\langle L_\rho\rangle
\]
and from Theorem \ref{thm:left-stabilizer}.

By Proposition \ref{prop:rotation}, \(L_\rho\) preserves the orientation of the cyclic ordering and acts as a rotation. Hence every power of \(L_\rho\) is orientation-preserving. By Proposition \ref{prop:reflection}, \(L_{r_n}\) reverses the cyclic ordering. Therefore every element in the coset \(L_{r_n}\langle L_\rho\rangle\) is orientation-reversing. These are all elements of the stabilizer by the preceding equality.
\end{proof}

\begin{remark}
The upper bound in Theorem \ref{thm:left-stabilizer} is independent of the pancake generating set. Once a Hamiltonian cycle on \(S_n\) is invariant under left multiplication by the \(n\)-cycle \(\rho\), every left-regular stabilizer element lies in the dihedral normalizer
\[
N_{S_n}^{\pm}(\rho)=\{g\in S_n:g\rho g^{-1}\in\{\rho,\rho^{-1}\}\}.
\]
This subgroup has order \(2n\), whereas the full normalizer of \(\langle\rho\rangle\) has order \(n\varphi(n)\).
\end{remark}

It remains to compare this subgroup with the full graph-automorphism stabilizer of \(Z_n\). For this we use the automorphism theorem for pancake graphs.

% ==================================================
\section{Full Automorphism Stabilizers and Hamilton Compression}
\label{sec:automorphisms}

For \(n\ge5\), the pancake graph has no graph automorphisms beyond the left
regular action. Thus Theorem \ref{thm:left-stabilizer} already gives the full
stabilizer in these ranks. The exceptional ranks \(3\) and \(4\) require
separate treatment.
\begin{theorem}[Deng--Zhang {\cite[Theorem 3.2]{deng-zhang}}]
\label{thm:deng-zhang}
For \(n\ge5\),
\[
\Aut(\Pn)=L(S_n).
\]
\end{theorem}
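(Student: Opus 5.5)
The plan is the standard route for normal Cayley graphs. Let \(A=\Aut(\Pn)\) and let \(A_{\id}\) be the stabilizer of the vertex \(\id\). Since \(L(S_n)\) acts regularly on the vertices, \(A=L(S_n)\,A_{\id}\). It therefore suffices to prove that \(A_{\id}\) is trivial for \(n\ge5\).

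The reduction step is the following claim. Suppose an automorphism \(\varphi\) fixes \(\id\) and preserves the edge label \(\lambda\) on every edge. Then \(\varphi(\pi r_k)=\varphi(\pi)r_k\) for all \(\pi,k\). Since \(\Pn\) is connected, induction along paths from \(\id\) gives \(\varphi=\id\). So the whole problem becomes: show that every automorphism fixing \(\id\) preserves \(\lambda\). I would split this into two parts.

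\textbf{Part 1 (a global invariant).} Find a graph-theoretic invariant of an edge that determines its label, using short cycles. The pancake graph has girth \(6\), so I would classify the \(6\)-, \(7\)- and \(8\)-cycles through an edge according to its label. I expect the following picture.
\begin{itemize}
\item Six-cycles arise only from the relation \((r_{k}r_{j})^3=\id\) for suitable pairs, such as \((r_2r_3)^3=\id\), and from its generalized forms \(r_k r_j r_i r_j r_k r_{k-j+i}\).
\item Every vertex lies on a known number of \(6\)-cycles and of \(7\)-cycles, and these numbers differ between labels, at least for the small labels.
\end{itemize}
The target is a statement of the form: the number of \(6\)-cycles (and, if needed, \(7\)-cycles) containing the edge \(\{\pi,\pi r_k\}\) is a function \(f(k)\) of \(k\) alone. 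If \(f\) is injective, then \(\lambda\) is preserved by every automorphism, and we are done.

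\textbf{Part 2 (a local fallback).} If \(f\) is not injective, I would instead show the weaker fact that labels are preserved locally at \(\id\). An automorphism fixing \(\id\) permutes the \(n-1\) neighbours \(r_2,\ldots,r_n\). The aim is to show that it fixes them all, using two ingredients:
\begin{itemize}
\item the count of short cycles through each edge \(\{\id,r_k\}\);
\item which pairs of neighbours lie on a common \(6\)-cycle, giving an auxiliary graph on \(\{r_2,\ldots,r_n\}\) whose automorphisms are easy to compute.
\end{itemize}
Applying this at every vertex via conjugation by left translations then yields global label preservation.

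The main obstacle is the cycle classification in Part 1: enumerating all \(6\)- and \(7\)-cycles in \(\Pn\) through a given edge, and checking that the resulting counts separate the labels. This is also exactly where \(n\ge5\) must enter. For \(n=4\) a non-left automorphism exists (cf. Example~\ref{ex:rank-four-label-word}), so the counts necessarily collide there. I would first try to separate \(r_n\), \(r_{n-1}\) and \(r_2,r_3\) by their \(6\)-cycle counts. I would then handle the middle labels by induction on \(n\), using the embedded copies of \(\mathcal P_{n-1}\) obtained by deleting \(r_n\)-edges.
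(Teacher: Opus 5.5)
\textbf{There is a genuine gap.} The paper gives no proof of this statement; it quotes it from Deng--Zhang. So your proposal has to stand as a self-contained proof, and it does not yet.

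\textbf{The framework is sound but not the content.} Your reduction is correct:
\begin{itemize}
\item \(A=L(S_n)A_{\id}\);
\item if every element of \(A_{\id}\) fixes \(r_2,\dots,r_n\), then applying this to \(L_{\varphi(\pi)}^{-1}\varphi L_\pi\) gives \(\varphi(\pi r_k)=\varphi(\pi)r_k\) everywhere, hence \(\varphi=1\);
\item once the \(r_n\)-edges are known to be invariant, deleting them leaves the cosets \(\pi S_{n-1}\cong\mathcal P_{n-1}\), so induction works.
\end{itemize}
All the content of the theorem lies in the step you leave open, and the invariants you propose for it fail. Let \(r_m\) be the largest letter of a reduced relation of length \(6\). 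Then \(r_m\) occurs two or three times, and a short computation forces \(m=3\). So the only \(6\)-cycles are the hexagons \(\pi\langle r_2,r_3\rangle\). Your ``generalized forms'' are not relations; for example \(r_4r_3r_2r_3r_4r_3\neq\id\). Consequently \(6\)-cycle data give only the partition \(\{r_2,r_3\}\mid\{r_4,\dots,r_n\}\). They cannot separate \(r_n\) from \(r_{n-1}\), as you planned. The auxiliary graph of Part~2 is a single edge \(\{r_2,r_3\}\) plus isolated points, with automorphism group \(\mathbb Z_2\times S_{n-3}\).

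\textbf{What is missing.} You need a classification of \(7\)-cycles. The same kind of argument shows they are exactly the translates of
\[
r_kr_{k-1}r_kr_{k-1}r_{k-2}r_kr_2,\qquad 4\le k\le n.
\]
These words have no rotational or reflective symmetry. Hence, for \(n\ge5\), the number of \(7\)-cycles through an edge labelled
\[
r_2,\quad r_3,\quad r_m\ (4\le m\le n-2),\quad r_{n-1},\quad r_n
\]
is \(2(n-2)\), \(6\), \(12\), \(10\), \(6\) respectively. At \(n=4\) the counts are \(4,4,6\), consistent with \(\alpha\). So \(r_n\) is the unique label with no hexagon and exactly six heptagons, which is what your induction step needs.

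\textbf{The base case still fails with counts alone.} In \(\mathcal P_5\), both \(r_2\) and \(r_3\) lie on one hexagon and six heptagons. Your \(f\) is therefore not injective exactly where the induction must start. You need a finer invariant at \(n=5\). One that works is the number of \(7\)-cycles containing both \(\{\id,r_a\}\) and \(\{\id,r_b\}\): in \(\mathcal P_5\) it is \(2\) for \(\{r_2,r_5\}\) but \(1\) for \(\{r_3,r_5\}\).

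With the \(6\)/\(7\)-cycle classification and this base-case invariant, your outline becomes a proof. As written, it only restates the problem.
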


\begin{proposition}[Rank \(4\)]
\label{prop:aut-p4}
For \(n=4\),
\[
\Aut(\mathcal P_4)=L(S_4)\rtimes\langle\alpha\rangle,
\]
where \(\alpha\) is conjugation by \((2\,3)\). Equivalently, $\alpha(r_2)=r_3, \alpha(r_3)=r_2, \alpha(r_4)=r_4.$
\end{proposition}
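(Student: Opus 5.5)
We need to show $\Aut(\mathcal P_4)=L(S_4)\rtimes\langle\alpha\rangle$. The plan has three parts. First, check that $\alpha$ is an automorphism. Second, check that it is not a left translation, which gives the lower bound. Third, bound the vertex stabilizer of $\id$ by $2$, which gives the upper bound.

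\emph{$\alpha$ is an automorphism.} Put $c=(2\,3)$ and $\alpha(\pi)=c\pi c^{-1}$. Then $\alpha$ is a group automorphism of $S_4$. Direct computation gives $c r_2 c=r_3$, $c r_3 c=r_2$ and $c r_4 c=r_4$: the position swap $2\leftrightarrow3$ exchanges the reversals of $\{1,2\}$ and $\{1,2,3\}$ in the right way, and commutes with the full reversal. Hence $\alpha(R_4)=R_4$, and $\alpha(\pi r_k)=\alpha(\pi)\alpha(r_k)$ maps edges to edges.

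\emph{$\alpha$ is not a left translation.} We have $\alpha(\id)=\id$, so if $\alpha$ were some $L_g$ then $g=\id$. But $\alpha\ne\id$, since it moves $r_2$.

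\emph{Semidirect structure.} $L(S_4)$ acts regularly on the vertices. Hence $\Aut(\mathcal P_4)=L(S_4)\cdot\Aut(\mathcal P_4)_{\id}$, and $L(S_4)\cap\Aut(\mathcal P_4)_{\id}=1$. Moreover $\alpha L_g\alpha^{-1}=L_{\alpha(g)}$, so $\alpha$ normalizes $L(S_4)$. It therefore suffices to show $|\Aut(\mathcal P_4)_{\id}|\le2$, i.e.\ that $\Aut(\mathcal P_4)_{\id}=\{\id,\alpha\}$.

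\emph{Bounding the vertex stabilizer.} I would do this with a cycle-counting invariant on edge labels. For each edge, count how many short cycles (say $6$-cycles) pass through it. In $\mathcal P_n$ there are no $3$-, $4$- or $5$-cycles. The $6$-cycles come from the relation $(r_2r_3)^3=1$, and for $n=4$ possibly also from other length-$6$ relators among $r_2,r_3,r_4$.

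I expect the $r_4$-edges to be distinguished from the $r_2$- and $r_3$-edges by such counts. The natural candidate is the number of $6$-cycles through the edge, or else the number of $8$-cycles; for instance, relations like $(r_2r_4)^?$ and $(r_3r_4)^?$ have different orders. Every automorphism fixing $\id$ then preserves the $r_4$-neighbour of every vertex, so it preserves the perfect matching $M_4$ of $r_4$-edges and the $2$-factor formed by the $r_2$- and $r_3$-edges (a union of hexagons).

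Now let $\phi$ fix $\id$. Composing with $\alpha$ if necessary, assume $\phi$ fixes the neighbours $r_2$, $r_3$ and $r_4$ of $\id$. I claim $\phi$ then fixes every vertex, by connectivity. Suppose $\phi$ fixes a vertex $v$ and all three of its neighbours. Then $\phi$ fixes the hexagon through $v$ pointwise, because it is determined by two consecutive fixed vertices. At $v r_4$, the $r_4$-neighbour is fixed, and the two $r_2/r_3$ neighbours are the two vertices of its hexagon adjacent to it. To pin these down, use a $6$-cycle or $8$-cycle that mixes labels, e.g.\ from the relation $(r_4r_3)^?$ or $(r_4 r_2 r_4 r_3)^?$, to tell which neighbour is which. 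Propagating this through the connected graph gives $\phi=\id$.

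\emph{Main obstacle.} The hard step is the bound $|\Aut(\mathcal P_4)_{\id}|\le2$: in particular, proving that the $r_4$-edge class is invariant, and that labels propagate rigidly once the local picture at $\id$ is fixed. If cycle counting turns out messy, my fallback is to cite the known result $|\Aut(\mathcal P_4)|=48$ (Deng--Zhang treat $n=4$ as an exception). Combined with the $48$ elements already exhibited, $L(S_4)\sqcup L(S_4)\alpha$, this gives equality at once.
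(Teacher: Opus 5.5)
Your fallback is exactly the paper's proof. The paper takes $|\Aut(\mathcal P_4)|=48$ from Deng--Zhang and deduces from regularity of $L(S_4)$ that the stabilizer of $\id$ has order $2$. It identifies that stabilizer as $\{1,\alpha\}$ and uses $\alpha L_g\alpha^{-1}=L_{\alpha(g)}$ for the semidirect structure. With that citation your argument is complete and correct, including the check $(2\,3)(1\,2)(2\,3)=(1\,3)=r_3$.

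The self-contained route you lead with is not yet a proof. You only conjecture which edges lie on $6$-cycles. The propagation step also names no invariant that separates $vr_4r_2$ from $vr_4r_3$. Your guess that $r_2r_4$ and $r_3r_4$ have different orders is false: both are $4$-cycles. This has to be so, since $\alpha$ swaps $r_2$-edges with $r_3$-edges, so any distinction must be relative to already-fixed vertices.

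Both holes close cheaply in rank $4$.
\begin{itemize}
\item Since $r_2,r_3$ are odd and $r_4$ is even, a $6$-cycle has an even number of $r_4$-letters, no two cyclically adjacent, hence $0$ or $2$.
\item In the second case $r_4Xr_4=Y^{-1}$, with $X,Y$ nonempty alternating words in $r_2,r_3$ and $|X|+|Y|=4$. Then $X$ and $Y$ both fix $4$ and $1$.
\item But one of them has length at most $2$, so it evaluates to $(1\,2)$, $(1\,3)$ or a $3$-cycle on $\{1,2,3\}$, none of which fixes $1$.
\end{itemize}
So the $6$-cycles are exactly the hexagons $\{\pi:\pi(4)=a\}$. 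Every automorphism therefore permutes these hexagons and preserves the $r_4$-matching.

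The missing rigidity now follows. The $r_4$-edge at $\pi$ enters hexagon $\pi(1)$, so $\pi r_2$ and $\pi r_3$ send their $r_4$-edges into the distinct hexagons $\pi(2)\ne\pi(3)$. Suppose $\phi$ fixes $\id,r_2,r_3$, after composing with $\alpha$ if necessary. Then $\phi$ fixes the hexagon of $\id$ pointwise, and its six $r_4$-neighbours meet all three other hexagons. Hence every hexagon is fixed setwise. Then $\phi$ can never swap the two hexagon-neighbours of a fixed vertex, so the fixed set is closed under adjacency and $\phi=1$ by connectivity.

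This version proves the count $48$ rather than importing it, at the cost of a rank-specific computation. The paper's version is shorter but rests entirely on the citation.
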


\begin{proof}
Deng and Zhang record that \(|\Aut(\mathcal P_4)|=48\). Since \(L(S_4)\le \Aut(\mathcal P_4)\) acts regularly on the \(24\) vertices
of \(\mathcal P_4\), the full automorphism group is transitive on vertices.
Hence the orbit of \(\id\) under \(\Aut(\mathcal P_4)\) has size \(24\).
By orbit--stabilizer and \(|\Aut(\mathcal P_4)|=48\), the stabilizer of
\(\id\) in \(\Aut(\mathcal P_4)\) has order \(2\).

Conjugation by \((2\,3)\) maps the generating set
\(\{r_2,r_3,r_4\}\) to itself; explicitly,
\[
r_2\mapsto r_3,\qquad r_3\mapsto r_2,\qquad r_4\mapsto r_4.
\]
It therefore induces a graph automorphism \(\alpha\) of
\(\Cay(S_4,\{r_2,r_3,r_4\})\) fixing \(\id\). Since \(\alpha\ne 1\), the
identity stabilizer is exactly \(\{1,\alpha\}\).

Every automorphism of \(\mathcal P_4\) is therefore uniquely of the form
\(L_g\) or \(L_g\alpha\), with \(g\in S_4\). Finally, \(\alpha\) normalizes
\(L(S_4)\), since $\alpha L_g\alpha^{-1}=L_{\alpha(g)}.$
Hence $\Aut(\mathcal P_4)=L(S_4)\rtimes\langle\alpha\rangle.$
\end{proof}

\begin{remark}
For \(n\ge5\), the graph \(\mathcal P_n\) is a graphical regular representation
of \(S_n\). In rank \(4\), the only nontrivial automorphism fixing \(\id\)
is the automorphism \(\alpha\) induced by conjugation with \((2\,3)\); it
interchanges \(r_2\) and \(r_3\) and fixes \(r_4\). Thus the non-left
automorphisms of \(\mathcal P_4\) are precisely the elements of the coset
\(L(S_4)\alpha\).
\end{remark}

It remains only to exclude the non-left coset in rank \(4\). We use the cyclic edge-label word \[ \calT_4=(r_2r_3r_2r_3r_2r_4)^4 \] from Example \ref{ex:rank-four-label-word}. In this cyclic word every occurrence of \(r_4\) is flanked, on both sides, by \(r_2\). Because the edge labels are intrinsic and the generators are involutions, this local cyclic property is unaffected by changing the initial vertex or by reversing the orientation of the traversal.

\begin{lemma}
\label{lem:alpha-not-stabilize}
No automorphism in the coset \(L(S_4)\alpha\) stabilizes \(Z_4\).
\end{lemma}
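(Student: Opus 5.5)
We would prove the lemma by tracking how an automorphism of the form \(L_g\alpha\) acts on edge labels, and then checking the result against the local pattern recorded in Example \ref{ex:rank-four-label-word}. First we record the label rule. For an edge \(\{\pi,\pi r_k\}\) of \(\mathcal P_4\),
\[
\alpha(\pi r_k)=\alpha(\pi)\alpha(r_k),
\]
because conjugation is a group automorphism. Hence \(\alpha\) sends an edge with label \(r_k\) to an edge with label \(\alpha(r_k)\). Since every left translation preserves \(\lambda\), each \(\phi=L_g\alpha\) satisfies
\[
\lambda(\phi(e))=\alpha(\lambda(e))
\]
for every edge \(e\). So \(\phi\) interchanges the labels \(r_2\) and \(r_3\) and fixes the label \(r_4\).

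Next, suppose \(\phi(Z_4)=Z_4\). Then \(\phi\) is an automorphism of the \(24\)-cycle \(Z_4\), that is, a rotation or a reflection of its vertex ordering. Reading the labels around \(\phi(Z_4)\) from \(\phi(\id)\) therefore gives \(\alpha\) applied letterwise to \(\calT_4\). That word is \((r_3r_2r_3r_2r_3r_4)^4\). Since \(\phi(Z_4)=Z_4\), this word must equal a cyclic rotation of \(\calT_4\), or of its reversal, as a cyclic word. The reversal case is allowed because the generators are involutions and labels are intrinsic to edges, as noted before the lemma.

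The contradiction comes from the local invariant stated in Example \ref{ex:rank-four-label-word}. In \(\calT_4\), every \(r_4\) is flanked on both sides by \(r_2\). This property survives cyclic rotation and reversal. In the transformed word, however, every \(r_4\) is flanked by \(r_3\). Since \(r_4\) occurs in \(\calT_4\), the two cyclic words cannot coincide, so no \(L_g\alpha\) stabilizes \(Z_4\).

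The only step needing care is the first: the claim that \(\alpha\) acts on labels as the automorphism \(\alpha\) of \(S_4\). This requires \(\alpha\) to act on vertices as conjugation by \((2\,3)\), which is how Proposition \ref{prop:aut-p4} defines it, and it follows from the one-line identity displayed above. The flanking claim itself is read off directly from the explicit block \(r_2r_3r_2r_3r_2r_4\). In the cyclic word, each \(r_4\) is preceded by the final \(r_2\) of its own block and followed by the initial \(r_2\) of the next block.
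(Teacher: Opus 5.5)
Your proof is correct and is essentially the paper's own argument: $L_g\alpha$ acts on intrinsic edge labels by $\alpha$, so the cyclic label word of $(L_g\alpha)(Z_4)$ is $\alpha(\calT_4)=(r_3r_2r_3r_2r_3r_4)^4$. The flanking pattern around $r_4$ then distinguishes this word from every rotation of $\calT_4$ and of its reversal. Your explicit derivation of the label rule from $\alpha(\pi r_k)=\alpha(\pi)\alpha(r_k)$ fills in a step the paper only asserts; as an aside, simply counting labels (twelve $r_2$-edges versus eight $r_3$-edges in $Z_4$) would already suffice.
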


\begin{proof}
Let \(F=L_g\alpha\), with \(g\in S_4\). Left multiplication preserves the
intrinsic edge labels, while \(\alpha\) sends each label \(r_i\) to
\(\alpha(r_i)\). Hence the cyclic label word of \(F(Z_4)\), up to cyclic
rotation and reversal, is obtained from \(\calT_4\) by applying \(\alpha\) to
each letter:
\[
\alpha(\calT_4)=(r_3r_2r_3r_2r_3r_4)^4.
\]
In this cyclic word every occurrence of \(r_4\) is flanked by \(r_3\) on both
sides. In contrast, in
\[
\calT_4=(r_2r_3r_2r_3r_2r_4)^4
\]
every occurrence of \(r_4\) is flanked by \(r_2\) on both sides, and this
remains true after reversal. Since \(r_2\ne r_3\), the labeled cyclic orders
cannot coincide. Therefore \(F(Z_4)\ne Z_4\).
\end{proof}

We can now state the full graph-automorphism stabilizer of \(Z_n\).

\begin{theorem}[Full graph-automorphism stabilizer]
\label{thm:full-stabilizer}
For \(n\ge4\), \(\Stab_{\Aut(\Pn)}(Z_n) = \langle L_\rho,L_{r_n}\rangle \cong D_n\), where \(\rho=r_{n-1}r_n\). For \(n=3\), the graph \(\mathcal P_3\) is the \(6\)-cycle \(Z_3\), and hence \(\Stab_{\Aut(\mathcal P_3)}(Z_3)=\Aut(\mathcal P_3)\cong D_6\), the dihedral group of order \(12\).
\end{theorem}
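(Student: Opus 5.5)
We split the argument into three rank regimes.

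\emph{Case $n\ge5$.} By Theorem~\ref{thm:deng-zhang}, $\Aut(\Pn)=L(S_n)$. Hence $\Stab_{\Aut(\Pn)}(Z_n)=\Stab_{L(S_n)}(Z_n)$, and Theorem~\ref{thm:left-stabilizer} identifies this with $\langle L_\rho,L_{r_n}\rangle\cong D_n$.

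\emph{Case $n=4$.} By Proposition~\ref{prop:aut-p4}, every automorphism of $\mathcal P_4$ lies either in $L(S_4)$ or in the coset $L(S_4)\alpha$, and these two cases are exclusive.
\begin{itemize}
\item By Lemma~\ref{lem:alpha-not-stabilize}, no element of $L(S_4)\alpha$ stabilizes $Z_4$.
\item The stabilizer therefore lies in $L(S_4)$, where it equals $\Stab_{L(S_4)}(Z_4)=\langle L_\rho,L_{r_4}\rangle\cong D_4$ by Theorem~\ref{thm:left-stabilizer}.
\end{itemize}
One step must be checked when invoking the lemma: for $F=L_g\alpha$, the labeled cyclic word of $F(Z_4)$ is $\alpha(\calT_4)$ up to rotation and reversal. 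This holds because $\alpha$ is conjugation by $(2\,3)$, a group automorphism permuting $R_4$, so it sends the edge $\{\pi,\pi r_k\}$ to $\{\alpha(\pi),\alpha(\pi)\alpha(r_k)\}$. The lemma's proof already records this.

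\emph{Case $n=3$.} First we verify that $\mathcal P_3$ is a $6$-cycle.
\begin{itemize}
\item $\mathcal P_3$ is $2$-regular on $6$ vertices, with generators $r_2,r_3$.
\item The product $r_2r_3$ is a $3$-cycle, since by Lemma~\ref{lem:rho-cycle-early} its inverse $r_3r_2$ has the same order as $\rho_3=r_2r_3$, namely $3$.
\item Alternating $r_2,r_3$ from $\id$ therefore returns to $\id$ only after $6$ steps. So $\mathcal P_3$ is connected and is a single $6$-cycle.
\end{itemize}
By Theorem~\ref{thm:zaks-path-prefix}, $Z_3$ is a Hamiltonian cycle in $\mathcal P_3$. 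A Hamiltonian cycle in a $6$-cycle graph uses all $6$ edges, so $Z_3$ equals the whole graph. Every automorphism of $\mathcal P_3$ thus fixes $Z_3$, and $\Aut(C_6)\cong D_6$ has order $12$.

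As a consistency check, the $n=3$ case of Theorem~\ref{thm:left-stabilizer} yields only the subgroup $L(S_3)$, of order $6$. The extra symmetries come from the non-left automorphisms of $C_6$, for example conjugation by $r_3$ or any vertex-fixing reflection. This is why $n=3$ is listed separately.

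The only point needing care is the $n=4$ label-transport step. It is already handled by Lemma~\ref{lem:alpha-not-stabilize}, so the proof reduces to assembling these earlier results.
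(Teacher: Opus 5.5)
Your proposal is correct and follows the paper's proof essentially verbatim: Theorem~\ref{thm:deng-zhang} with Theorem~\ref{thm:left-stabilizer} for $n\ge5$, Proposition~\ref{prop:aut-p4} with Lemma~\ref{lem:alpha-not-stabilize} for $n=4$, and the observation that $\mathcal P_3$ is itself the $6$-cycle $Z_3$ for $n=3$. One slip in your closing consistency remark, which the proof does not use: conjugation by $r_3=(1\,3)$ is not an automorphism of $\mathcal P_3$, since it sends $r_2=(1\,2)$ to $(2\,3)\notin R_3$; the non-left automorphism fixing $\id$ is conjugation by $(2\,3)=r_2r_3r_2$, which swaps $r_2$ and $r_3$.
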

\begin{proof}
If \(n\ge5\), then Theorem \ref{thm:deng-zhang} gives \(\Aut(\Pn)=L(S_n)\), and the assertion follows from Theorem \ref{thm:left-stabilizer}.

Let \(n=4\). By Proposition \ref{prop:aut-p4},
\[
\Aut(\mathcal P_4)=L(S_4)\rtimes\langle\alpha\rangle.
\]
The intersection of \(\Stab_{\Aut(\mathcal P_4)}(Z_4)\) with \(L(S_4)\) is, by Theorem \ref{thm:left-stabilizer},
\[
\langle L_{r_3r_4},L_{r_4}\rangle\cong D_4.
\]
Lemma \ref{lem:alpha-not-stabilize} excludes the coset \(L(S_4)\alpha\). Hence
\[
\Stab_{\Aut(\mathcal P_4)}(Z_4)
=
\langle L_{r_3r_4},L_{r_4}\rangle.
\]

Finally, for \(n=3\), the graph \(\mathcal P_3=\Cay(S_3,\{r_2,r_3\})\) is a cycle of length \(6\), because the cyclic transition word is \((r_2r_3)^3\). Thus \(Z_3\) is the entire graph \(\mathcal P_3\), and its stabilizer in the full graph automorphism group is all of \(\Aut(\mathcal P_3)\), namely the dihedral group of the \(6\)-cycle.
\end{proof}

\begin{remark}
The distinction between the left-regular and full graph-automorphism
stabilizers occurs only in low rank. The situation is summarized as follows:
\[
\begin{array}{c|c|c|c|c}
n & |V(\mathcal P_n)| & \ord(\rho) &
|\Stab_{L(S_n)}(Z_n)| &
|\Stab_{\Aut(\mathcal P_n)}(Z_n)|\\
\hline
3 & 6   & 3 & 6  & 12\\
4 & 24  & 4 & 8  & 8\\
5 & 120 & 5 & 10 & 10
\end{array}
\]
For \(n=3\), \(\mathcal P_3\) is itself a \(6\)-cycle. For \(n=4\), the graph
has a non-left automorphism coset, but Lemma \ref{lem:alpha-not-stabilize}
shows that this coset does not preserve \(Z_4\).
\end{remark}

% --------------------------------------------------
\subsection{Hamilton compression of the Zaks cycle}

We use the terminology of Gregor, Merino, and M\"utze
\cite{gregor-merino-mutze}. Let \(G\) be a graph on \(N\) vertices, and let
\[
C=(x_0,x_1,\ldots,x_{N-1})
\]
be a Hamiltonian cycle, with indices read modulo \(N\). For \(k\mid N\), the
cycle \(C\) is \(k\)-symmetric if the map
\[
x_i\longmapsto x_{i+N/k}
\qquad(0\le i<N)
\]
is induced by an automorphism of \(G\). Its compression factor is
\[
\Comp(G,C)=\max\{k:C\text{ is }k\text{-symmetric}\},
\]
and the Hamilton compression of \(G\) is
\[
\Comp(G)=\max_C\Comp(G,C),
\]
where \(C\) ranges over all Hamiltonian cycles of \(G\).

If \(C\) is \(k\)-symmetric, then the inducing automorphism is orientation-preserving on \(C\) and has order \(k\) on \(V(C)=V(G)\). Hence \(\Comp(G,C)\) is bounded above by the maximum order of an orientation-preserving element of \(\Stab_{\Aut(G)}(C)\).

\begin{theorem}[Compression factor of the Zaks cycle]
\label{thm:zaks-compression}
For the Zaks cycle \(Z_n\) in the pancake graph \(\Pn\),
\[
\Comp(\Pn,Z_n)=
\begin{cases}
6, & n=3,\\
n, & n\ge4.
\end{cases}
\]
\end{theorem}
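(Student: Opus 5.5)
The plan is to sandwich \(\Comp(\Pn,Z_n)\) between a lower bound, obtained from an explicit rotation, and an upper bound, obtained from the stabilizer computations of Theorems \ref{thm:left-stabilizer} and \ref{thm:full-stabilizer}.

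For \(n=3\) the situation is degenerate. By Theorem \ref{thm:full-stabilizer}, \(\mathcal P_3=Z_3\) is a \(6\)-cycle, and \(\Aut(\mathcal P_3)\cong D_6\) contains the rotation by one vertex. That rotation induces \(x_i\mapsto x_{i+1}=x_{i+6/6}\), so \(Z_3\) is \(6\)-symmetric. Since \(k\mid 6\) forces \(k\le6\), we get \(\Comp(\mathcal P_3,Z_3)=6\).

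Now let \(n\ge4\). For the lower bound we use Proposition \ref{prop:rotation}. Along the cyclic ordering of \(Z_n\), \(L_\rho\) acts as a shift by exactly \((n-1)!=n!/n\) positions. This is the map \(x_i\mapsto x_{i+N/n}\) with \(N=n!\), and it is induced by the automorphism \(L_\rho\), so \(Z_n\) is \(n\)-symmetric and \(\Comp(\Pn,Z_n)\ge n\).

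For the upper bound we use the observation stated before the theorem. If \(Z_n\) is \(k\)-symmetric, the inducing automorphism lies in \(\Stab_{\Aut(\Pn)}(Z_n)\), preserves orientation, and acts as a shift by \(N/k\) positions, so its order is exactly \(k\). By Theorem \ref{thm:full-stabilizer}, for \(n\ge4\) this stabilizer is \(\langle L_\rho,L_{r_n}\rangle\). Corollary \ref{cor:two-cosets} identifies its orientation-preserving elements as the \(L_{\rho^i}\), each of order dividing \(n\). Hence \(k\le n\), and therefore \(\Comp(\Pn,Z_n)=n\).

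The one point needing care is that a \(k\)-symmetry must be an orientation-preserving element of order exactly \(k\), not merely some element of order \(k\). Otherwise a reflection could in principle give order \(2\), but \(2\le n\) anyway, so the bound is unaffected. Beyond that, the real work, namely excluding the non-left automorphism of \(\mathcal P_4\) and using Deng--Zhang for \(n\ge5\), is already done by Theorem \ref{thm:full-stabilizer}. This is exactly why \(n=3\) differs: there the full automorphism group is larger than the left-regular stabilizer and supplies a rotation of order \(6\).
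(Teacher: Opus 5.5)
Your proposal is correct and follows essentially the same route as the paper. The lower bound comes from $L_\rho$ shifting $Z_n$ by $(n-1)!=n!/n$ positions (Proposition~\ref{prop:rotation}). The upper bound comes from Theorem~\ref{thm:full-stabilizer} together with Corollary~\ref{cor:two-cosets}, which confine any inducing orientation-preserving automorphism to $\langle L_\rho\rangle$, and the case $n=3$ holds because $\mathcal P_3$ is itself the $6$-cycle $Z_3$.
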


\begin{proof}
Let $N=n!$. For $n\ge4$, Theorem~\ref{thm:full-stabilizer} gives $\Stab_{\Aut(\Pn)}(Z_n)=\langle L_\rho,L_{r_n}\rangle$. By Corollary~\ref{cor:two-cosets}, the orientation-preserving subgroup of this stabilizer is exactly $\langle L_\rho\rangle$, of order $n$. Proposition~\ref{prop:rotation} shows that $L_\rho$ shifts the cyclic ordering of $Z_n$ by $(n-1)!=N/n$ vertices. Thus $Z_n$ is $n$-symmetric, and $\Comp(\Pn,Z_n)\ge n$.

Conversely, suppose $Z_n$ is $k$-symmetric. Then the defining shift $x_i\mapsto x_{i+N/k}$ is induced by an orientation-preserving automorphism of $Z_n$ of order $k$. This automorphism lies in the orientation-preserving subgroup $\langle L_\rho\rangle$, whose order is $n$. Therefore $k\le n$. Hence $\Comp(\Pn,Z_n)=n$ for $n\ge4$.

For $n=3$, the graph $\mathcal P_3$ is the $6$-cycle $Z_3$. The rotation by one vertex is a graph automorphism, so $Z_3$ is $6$-symmetric. Since no Hamiltonian cycle on six vertices has compression factor exceeding $6$, we get $\Comp(\mathcal P_3,Z_3)=6$.

\end{proof}

\begin{corollary}
\label{cor:compression-lower-bound}
For every \(n\ge4\), \(\Comp(\Pn)\ge n\).
\end{corollary}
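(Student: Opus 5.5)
The plan is to read Corollary \ref{cor:compression-lower-bound} off Theorem \ref{thm:zaks-compression}, using only the definition of Hamilton compression as a maximum over Hamiltonian cycles. By definition,
\[
\Comp(\Pn)=\max_C\Comp(\Pn,C),
\]
where \(C\) ranges over all Hamiltonian cycles of \(\Pn\). So any single Hamiltonian cycle whose compression factor is at least \(n\) gives the bound. We take \(C=Z_n\).

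First, \(Z_n\) is an admissible competitor. Theorem \ref{thm:zaks-path-prefix} shows that \(Z_n\) is a Hamiltonian cycle of \(\Pn\) for every \(n\ge3\), and in particular for \(n\ge4\). Second, Theorem \ref{thm:zaks-compression} gives \(\Comp(\Pn,Z_n)=n\) for \(n\ge4\). Combining the two,
\[
\Comp(\Pn)\ge\Comp(\Pn,Z_n)=n .
\]

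Only the lower half of Theorem \ref{thm:zaks-compression} is actually needed, namely that \(Z_n\) is \(n\)-symmetric. This follows directly from Proposition \ref{prop:rotation}. Since \(n\mid n!\), the quantity \(N/n=(n-1)!\) is a legitimate shift. By Proposition \ref{prop:rotation}, the graph automorphism \(L_\rho\) of \(\Pn\) sends \(x_i\) to \(x_{i+(n-1)!}\) along the cyclic order of \(Z_n\), which is exactly the definition of \(n\)-symmetry. Hence the corollary does not rely on the full-stabilizer computation or on the Deng--Zhang theorem.

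There is no real obstacle here. The only point to check is that the shift induced by \(L_\rho\) goes in the direction \(x_i\mapsto x_{i+N/k}\) required by the definition, and not in the opposite direction. If the orientation were reversed, we would use \(L_{\rho^{-1}}\) instead, which is also in the stabilizer, so the bound holds either way.
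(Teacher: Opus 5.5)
Your proof is correct and is essentially the paper's argument: $Z_n$ is a Hamiltonian cycle of $\Pn$, and Theorem~\ref{thm:zaks-compression} gives $\Comp(\Pn,Z_n)=n$, so the maximum over all Hamiltonian cycles is at least $n$. Your further remark is also accurate: only the $n$-symmetry supplied by Proposition~\ref{prop:rotation} is needed, with $L_\rho$ sending $x_i\mapsto x_{i+(n-1)!}$ in the forward direction. The full-stabilizer computation and the Deng--Zhang theorem play no role in the lower bound.
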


\begin{proof}
The graph parameter \(\Comp(\Pn)\) is the maximum of \(\Comp(\Pn,C)\) over all Hamiltonian cycles \(C\) in \(\Pn\). The Zaks cycle is one such Hamiltonian cycle, and Theorem \ref{thm:zaks-compression} gives \(\Comp(\Pn,Z_n)=n\).
\end{proof}

\begin{remark}
Theorem \ref{thm:zaks-compression} determines the compression factor of the Zaks cycle, not the Hamilton compression of the pancake graph itself. It remains possible that \(\Pn\) admits Hamiltonian cycles with compression factor strictly larger than \(n\). The stabilizer computation proves only that no such improvement can occur within the automorphism stabilizer of the particular cycle \(Z_n\).
\end{remark}

% ==================================================
\section{Consequences and Further Directions}
\label{sec:further}

We record three consequences of the proof. First, the block rotation has a
quotient-voltage interpretation. Second, Zaks' ranking and unranking procedures
transfer directly to the prefix-reversal form. Third, the stabilizer computation
suggests several classification problems for symmetric Hamiltonian cycles in
pancake graphs and related Cayley graphs.

% --------------------------------------------------
\subsection{The quotient by the block rotation}

We use the standard quotient viewpoint for graphs with a semiregular automorphism, as in Alspach's framework for lifting Hamilton cycles from quotient graphs \cite{alspach-lifting}. Let $A=\langle L_\rho\rangle.$
Since \(\rho\) has order \(n\) and the left regular action is free, \(A\) acts semiregularly on \(V(\Pn)=S_n\). We write \(\Pn/A\) for the quotient multigraph whose vertices are the \(A\)-orbits and whose edges are induced by edges of \(\Pn\). Multiple edges may occur.

Let \(H_n=\{\pi\in S_n:\pi(n)=n\}\). This is the standard embedded copy of \(S_{n-1}\) in \(S_n\), generated by \(r_2,\ldots,r_{n-1}\). Every \(A\)-orbit meets \(H_n\) in exactly one vertex. Indeed, if \(\pi\in S_n\), then the final entry of \(\rho^i\pi\) is \((\rho^i\pi)(n)=\rho^i(\pi(n))\). Since \(\rho\) is an \(n\)-cycle on the values \(\{1,\ldots,n\}\), there is a unique \(i\pmod n\) such that \(\rho^i(\pi(n))=n\). Thus quotient vertices may be indexed by elements of \(H_n\). We denote the orbit of \(h\in H_n\) by \([h]=\{\rho^i h:0\le i<n\}\). The quotient carries a natural voltage description. For \(h\in H_n\) and \(s\in R_n\), there are unique \(\nu(h,s)\in\mathbb Z/n\mathbb Z \text{ and } h'\in H_n\) such that \(hs=\rho^{\nu(h,s)}h'\). The corresponding quotient edge joins \([h]\) to \([h']\), has label \(s\), and has voltage \(\nu(h,s)\). With this convention, the lift of a quotient edge starting at \(\rho^i h\) ends at \(\rho^i hs = \rho^{i+\nu(h,s)}h'\). Let \(v_0=\id,\ v_1,\ldots,v_{M-1}=r_{n-1}\), \(M=(n-1)!\), be the vertex
sequence in \(H_n\) generated by \(W_{n-1}\). Since
\(v_0,\ldots,v_{M-1}\) are distinct and \(H_n\) is a transversal for the
\(A\)-orbits, the quotient vertices \([v_0],\ldots,[v_{M-1}]\) are distinct.
The quotient image of \(Z_n\) is the cycle $[v_0],[v_1],\ldots,[v_{M-1}],[v_0]$
in \(\Pn/A\).

\begin{proposition}[Voltage form of the Zaks lift]
\label{prop:voltage-lift}
Under the identification of \(V(\Pn/A)\) with \(H_n\), the quotient image of \(Z_n\) is the closed walk with label word \(W_{n-1}r_n\). All edges belonging to the subword \(W_{n-1}\) have voltage \(0\), and the final edge labelled \(r_n\) has voltage \(1\). Consequently the total voltage of the quotient cycle is \(1\in\mathbb Z/n\mathbb Z\), and its lift is the single Hamiltonian cycle \(Z_n\).
\end{proposition}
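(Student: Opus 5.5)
The proposition is essentially the quotient reading of the block decomposition in Lemma~\ref{lem:block-decomposition}, so the plan is to compute voltages along the lift of the first block and then count.

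\textbf{Step 1: edges inside $W_{n-1}$.} Let $s$ be a letter of $W_{n-1}$, taken at position $j$, so that $v_{j+1}=v_js$. Since $s\in\{r_2,\ldots,r_{n-1}\}\subseteq H_n$ and $v_j\in H_n$, the product $v_js$ lies in $H_n$. The decomposition $hs=\rho^{\nu}h'$ with $h'\in H_n$ is unique, because $H_n$ is a transversal of the $A$-orbits and $A$ acts freely. Hence $\nu(v_j,s)=0$ and $h'=v_{j+1}$. The quotient edge therefore joins $[v_j]$ to $[v_{j+1}]$ with voltage $0$.

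\textbf{Step 2: the closing edge $r_n$.} The last vertex of the block is $v_{M-1}=r_{n-1}$. Then
\[
r_{n-1}r_n=\rho=\rho^1\cdot\id,
\]
and $\id\in H_n$. So this quotient edge goes from $[v_{M-1}]$ to $[v_0]$ with voltage $1$. Together with Step 1, the quotient image is the closed walk $[v_0],\ldots,[v_{M-1}],[v_0]$ with label word $W_{n-1}r_n$. It is a cycle because the classes $[v_j]$ are distinct, and its total voltage is $0\cdot(M-1)+1=1$.

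\textbf{Step 3: the lift.} Starting at $\rho^i v_0$, lifting the walk once ends at $\rho^{i+1}v_0$. Lifting $n$ times returns to the start after visiting the vertices $\rho^i v_j$ for all $i\in\mathbb Z/n\mathbb Z$ and $0\le j<M$. This is $nM=n!$ distinct vertices, since $(i,j)\mapsto\rho^iv_j$ is a bijection by the transversal property. Because $1$ generates $\mathbb Z/n\mathbb Z$, the lift is a single closed cycle rather than several. By Lemma~\ref{lem:block-decomposition} and Proposition~\ref{prop:rotation}, its vertex sequence is $P_0,P_1,\ldots,P_{n-1}$, which is exactly $Z_n$.

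The only real subtlety is the orientation convention for voltages. The convention $hs=\rho^{\nu}h'$ acts on the left, and the lift from $\rho^ih$ must land at $\rho^{i+\nu}h'$; this holds because $\rho^i(hs)=\rho^{i+\nu}h'$. I will check this explicitly so that the voltage comes out as $+1$ and not $-1$. Either sign generates $\mathbb Z/n\mathbb Z$, so the single-cycle conclusion holds regardless.
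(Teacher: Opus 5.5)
Your proposal is correct and follows essentially the same route as the paper: voltage $0$ on the letters of $W_{n-1}$ because $H_n$ is closed under right multiplication by $r_2,\ldots,r_{n-1}$, voltage $1$ on the closing edge via $r_{n-1}r_n=\rho\cdot\id$, and then $n$ successive lifts that trace out the blocks $P_0,\ldots,P_{n-1}$ of Lemma~\ref{lem:block-decomposition}. Your extra checks (uniqueness of the decomposition $hs=\rho^{\nu}h'$, the vertex count $nM=n!$, and the sign convention for the voltage) only make explicit what the paper's proof leaves implicit.
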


\begin{proof}
For every letter \(r_k\) with \(2\le k\le n-1\), right multiplication by \(r_k\) preserves \(H_n\). Hence if \(v_{j+1}=v_jr_k\), then \(v_jr_k=\rho^0v_{j+1}\), so the corresponding quotient edge has voltage \(0\). The final vertex of the path generated by \(W_{n-1}\) is \(v_{M-1}=r_{n-1}\). Multiplication by \(r_n\) gives \(r_{n-1}r_n=\rho=\rho^1\id\). Thus the closing quotient edge from \([r_{n-1}]\) to \([\id]\) has label \(r_n\) and voltage \(1\). Starting from \(\id\), one traversal of the quotient cycle lifts to the block \(P_0=(v_0,\ldots,v_{M-1})\) and ends in the next fibre, at \(\rho\). The next traversal lifts to \(P_1\), and in general the \(i\)-th traversal lifts to \(P_i\). Since the total voltage is \(1\), which generates \(\mathbb Z/n\mathbb Z\), the lift closes only after \(n\) quotient traversals. The resulting lifted cycle is precisely \(P_0,P_1,\ldots,P_{n-1}\), with the connecting \(r_n\)-edges described in Lemma \ref{lem:block-decomposition}. This is \(Z_n\).
\end{proof}

\begin{remark}
Proposition \ref{prop:voltage-lift} is the quotient counterpart of the block decomposition. It also explains why the compression factor \(n\) appears naturally: the cycle \(Z_n\) is obtained by lifting a quotient cycle on \((n-1)!\) vertices through a cyclic voltage group of order \(n\), with total voltage a generator.
\end{remark}

% --------------------------------------------------
\subsection{Algorithmic transfer from Zaks' ordering}

Zaks' construction was designed as a permutation-generation algorithm, not
merely as an existence proof. The conjugation $\Phi(\pi)=J_n\pi J_n$ transfers generation, ranking, and unranking from the suffix-reversal ordering
to the prefix-reversal path.

\begin{proposition}[Algorithmic transfer]
\label{prop:algorithmic-transfer}
Generation, ranking, and unranking for the prefix Zaks path reduce to the
corresponding operations for Zaks' suffix-reversal order by the fixed conjugacy
\[
\Phi(\pi)=J_n\pi J_n.
\]
Explicitly,
\[\operatorname{rank}_{Z}(\pi)
=
\operatorname{rank}_{\mathrm{suf}}(J_n\pi J_n),
\]
and
\[
\operatorname{unrank}_{Z}(m)
=
J_n\,\operatorname{unrank}_{\mathrm{suf}}(m)\,J_n.
\]
The transition-length sequence is unchanged. In one-line notation, applying
\(\Phi\) costs \(O(n)\) time and requires no search.
\end{proposition}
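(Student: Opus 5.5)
The plan is to treat $\Phi(\pi)=J_n\pi J_n$ as a bijection between the two vertex sequences and to transport the suffix-order data through it. I would first record the basic algebraic facts. Since $J_n$ is an involution, $\Phi$ is an involutive bijection of $S_n$ with $\Phi(\id)=\id$. It is multiplicative: $\Phi(\pi\sigma)=\Phi(\pi)\Phi(\sigma)$. Lemma \ref{lem:prefix-suffix-conjugacy} gives $\Phi(t_k)=r_k$.

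Let $\pi^{\mathrm{suf}}_0=\id,\pi^{\mathrm{suf}}_1,\ldots$ be Zaks' suffix-reversal sequence, with transition letters $t_{i_1},t_{i_2},\ldots$ read from $s_n$. By the proof of Theorem \ref{thm:zaks-path-prefix}, the $m$-th vertex of the prefix Zaks path is $u_m=\Phi(\pi^{\mathrm{suf}}_m)$, and its transition letters are $r_{i_1},r_{i_2},\ldots$, which is exactly $W_n$. I would make this indexwise identity explicit by a one-line induction on $m$, using multiplicativity of $\Phi$.

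Next I would define the maps. Set $\operatorname{unrank}_Z(m)=u_m$ and $\operatorname{unrank}_{\mathrm{suf}}(m)=\pi^{\mathrm{suf}}_m$. Then the identity $u_m=\Phi(\pi^{\mathrm{suf}}_m)$ is literally $\operatorname{unrank}_Z(m)=J_n\operatorname{unrank}_{\mathrm{suf}}(m)J_n$. Both sequences are Hamiltonian by Theorem \ref{thm:zaks-original} and Theorem \ref{thm:zaks-path-prefix}, so the rank maps are the inverses of the unrank maps. Because $\Phi=\Phi^{-1}$, we get $\operatorname{rank}_Z(\pi)=m$ if and only if $\Phi(\pi)=\pi^{\mathrm{suf}}_m$, which is the ranking formula. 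For generation, the transition-length sequence of both paths is the integer sequence $s_n$, so the prefix path is produced by running Zaks' generator and reading each length $k$ as $r_k$ instead of $t_k$.

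The cost claim is a one-line-notation computation. Right multiplication by $J_n$ reverses the list of entries. Left multiplication by $J_n$ replaces each value $v$ by $n+1-v$. So $\Phi(\pi)$ is obtained by reversing the entries of $\pi$ and complementing each one. This is $O(n)$ time with no search. I would verify it by evaluating $(J_n\pi J_n)(i)=n+1-\pi(n+1-i)$.

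I expect the only real obstacle to be bookkeeping rather than mathematics. The indexwise correspondence needs both orderings to start at $\id$ and use the same index set $0,\ldots,n!-1$. Any normalization of ranks in Zaks' original paper, such as a different starting word or 1-based ranks, has to be matched, and I would state that the formulas hold under the starting convention of Theorem \ref{thm:zaks-original}.
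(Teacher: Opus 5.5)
Your proposal is correct and follows essentially the same route as the paper. Both arguments transport Zaks' transition letters through $\Phi$ via Lemma \ref{lem:prefix-suffix-conjugacy}, obtaining $u_m=\Phi(\pi^{\mathrm{suf}}_m)$ indexwise, and then use $\Phi=\Phi^{-1}$ to get the ranking formula. Your explicit formula $(J_n\pi J_n)(i)=n+1-\pi(n+1-i)$ makes the $O(n)$ cost claim precise, which the paper only asserts; the paper also loosely calls $\Phi$ a relabelling of positions, whereas it actually reverses positions and complements values.
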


\begin{proof}
The transition length \(k\) in Zaks' suffix order corresponds, under \(\Phi\), to the prefix transition \(r_k\) by Lemma \ref{lem:prefix-suffix-conjugacy}. Thus the transition-length sequence is identical.

The ranking and unranking formulae above follow from the identity \(\Phi(\sigma t_k)=\Phi(\sigma)r_k\) for every suffix reversal \(t_k\). Since \(\Phi\) is an involution, conversion between the suffix and prefix orderings requires no additional search or recursion; it is the fixed relabelling of positions given by conjugation with \(J_n\).
\end{proof}

\begin{remark}
If permutations are stored explicitly in one-line notation, applying an actual prefix reversal of length \(k\) costs \(O(k)\) elementary swaps. Zaks' transition statistics therefore transfer unchanged to the prefix form. In particular, the average reversed segment length remains bounded by the same constant as in the suffix-reversal formulation. If, instead, one charges unit cost for producing the next transition label, then the label-generation problem is exactly the same for the suffix and prefix versions.
\end{remark}

% --------------------------------------------------
\subsection{Open problems}

The following problems delimit the scope of the present result.

\begin{problem}[Comparison with other pancake cycles]
Kanevsky and Feng constructed Hamiltonian cycles in pancake graphs together
with ranking and unranking algorithms \cite{kanevsky-feng}. Determine whether
their cycles are equivalent to \(Z_n\) under graph automorphisms, reversal of
the Hamiltonian cycle, or relabelling of the generating set. If they are
inequivalent, compute their left-regular stabilizers and compare their
compression factors with that of \(Z_n\).
\end{problem}

\begin{problem}[Coxeter-theoretic analogues]
Let \(W\) be a finite Coxeter group with a nested chain of standard parabolic
subgroups
\[
W_1<W_2<\cdots<W_m=W,
\]
and let \(w_i\) denote the longest element of \(W_i\). For Cayley graphs
generated by \(\{w_2,\ldots,w_m\}\), determine whether recursive Gray-code
constructions analogous to Zaks' construction admit visible left-regular
rotations whose stabilizers are controlled by cyclic-order rigidity.

In type \(A_{n-1}\), the generators \(r_k\) are precisely the longest elements
of the standard parabolic subgroups
\[
S_2<S_3<\cdots<S_n,
\]
and \(r_{n-1}r_n\) is an \(n\)-cycle, hence a Coxeter element.
\end{problem}

\begin{problem}[Hamilton compression of pancake graphs]
Determine
\[
\Comp(\Pn)=\max_C\Comp(\Pn,C),
\]
where \(C\) ranges over all Hamiltonian cycles of \(\Pn\). Theorem
\ref{thm:zaks-compression} gives \(\Comp(\Pn)\ge n\) for \(n\ge4\).

For \(n\ge5\), Deng--Zhang's theorem implies that every automorphism of
\(\Pn\) is a left translation. Hence any \(k\)-symmetric Hamiltonian cycle is
rotated by some element of \(S_n\) of order \(k\). Which conjugacy classes of
\(S_n\) can occur in this way? In particular, can \(\Pn\) admit a Hamiltonian
cycle with compression factor strictly larger than \(n\)?
\end{problem}

\begin{problem}[\(\rho\)-invariant Hamiltonian cycles]
Classify the Hamiltonian cycles \(C\subseteq \Pn\) satisfying
\[
L_\rho(C)=C.
\]
Equivalently, classify the Hamiltonian cycles in the quotient multigraph
\[
\Pn/\langle L_\rho\rangle
\]
whose total voltage generates \(\mathbb Z/n\mathbb Z\).
\end{problem}

\begin{problem}[Stabilizer spectrum]
For \(n\ge5\), determine the set
\[
\Sigma_n=
\left\{
|\Stab_{\Aut(\Pn)}(C)|:
C\text{ is a Hamiltonian cycle of }\Pn
\right\}.
\]
Since \(\Aut(\Pn)=L(S_n)\) for \(n\ge5\), this is equivalently the spectrum of left-regular stabilizer sizes of Hamiltonian cycles in \(\Pn\). Theorem \ref{thm:full-stabilizer} shows that \(2n\in\Sigma_n\).
\end{problem}

\begin{problem}[Other recursive Cayley Gray codes]
Determine whether analogous stabilizer computations hold for recursive Hamiltonian cycles in other standard Cayley graphs of \(S_n\), including the permutahedron, the star graph, and the transposition graph. More generally, identify recursive Gray-code constructions for which a visible left-regular rotation and cyclic-order rigidity determine the full left-regular stabilizer.
\end{problem}

% ==================================================
\section*{Conclusion}

Zaks' recursive Gray code yields a Hamiltonian cycle \(Z_n\) in the pancake
graph whose recursive block structure gives an \(n\)-fold rotation and whose
palindromy gives a reflection. We proved that these visible symmetries exhaust
the left-regular stabilizer:
\[
\Stab_{L(S_n)}(Z_n)
=
\langle L_{r_{n-1}r_n},L_{r_n}\rangle
\cong D_n.
\]
The upper bound is supplied by cyclic-order rigidity: once \(Z_n\) is invariant
under the rotation \(L_{r_{n-1}r_n}\), every left-regular stabilizer element is
forced into the dihedral normalizer of the \(n\)-cycle \(r_{n-1}r_n\).

For \(n\ge5\), Deng--Zhang's automorphism theorem identifies
\(\Aut(\Pn)\) with \(L(S_n)\), so the same dihedral group is the full
graph-automorphism stabilizer. The exceptional ranks \(3\) and \(4\) are
accounted for separately. The same stabilizer computation gives the compression
factor of \(Z_n\): it is \(n\) for \(n\ge4\) and \(6\) for \(n=3\). Finally, the
quotient-voltage formulation realizes \(Z_n\) as the lift of a quotient cycle on
\((n-1)!\) vertices with total voltage generating \(\mathbb Z/n\mathbb Z\).

% ==================================================

\end{document}